\numberwithin{equation}{section}
\begin{document}

\newtheorem{theorem}{Theorem}[section]
\newtheorem{lemma}[theorem]{Lemma}
\newtheorem{proposition}[theorem]{Proposition}
\newtheorem{corollary}[theorem]{Corollary}

\theoremstyle{definition}
\newtheorem{definition}[theorem]{Definition}
\newtheorem{example}[theorem]{Example}

\theoremstyle{remark}
\newtheorem{remark}[theorem]{Remark}

\newcommand{\stirl}[2]{\genfrac{\{}{\}}{0pt}{}{#1}{#2}}
\newcommand{\dstirl}[2]{\genfrac{\{}{\}}{0pt}{0}{#1}{#2}}

\renewcommand{\bar}{\overline}
\newcommand{\de}{\partial}
\newcommand{\per}{\!\cdot\!}

\newcommand{\Aff}{\operatorname{Aff}}
\newcommand{\Hom}{\operatorname{Hom}}
\newcommand{\End}{{\operatorname{End}}}
\newcommand{\Diff}{\operatorname{Diff}}
\newcommand{\Coder}{\operatorname{Coder}}
\newcommand{\Q}{\mathbb{Q}}
\newcommand{\Z}{\mathbb{Z}}
\newcommand{\K}{\mathbb{K}\,}

\title{Uniqueness and intrinsic properties of non-commutative Koszul brackets}
\author{Marco Manetti}

\address{\newline
Universit\`a degli studi di Roma La Sapienza,\hfill\newline
Dipartimento di Matematica \lq\lq Guido
Castelnuovo\rq\rq,\hfill\newline
P.le Aldo Moro 5,
I-00185 Roma, Italy.}
\email{manetti@mat.uniroma1.it}
\urladdr{www.mat.uniroma1.it/people/manetti/}

\renewcommand{\subjclassname}{%
\textup{2010} Mathematics Subject Classification}

\subjclass[2010]{17B60,17B70}
\keywords{Graded Lie algebras, Koszul brackets}

\date{May 6, 2016}

\begin{abstract}
There exists a unique  natural extension of  higher Koszul brackets to every unitary associative algebra in a way  that every square zero operator of degree 1 gives a curved $L_{\infty}$ structure. 
\end{abstract}
\maketitle

\section*{Introduction}

The name (higher, commutative) Koszul brackets is usually referred to the sequence of graded symmetric maps
$\Psi^n_f\colon A^{\odot n}\to A$, $n\ge 0$, defined in \cite{koszul85} for every graded commutative unitary algebra $A$ and every linear endomorphism $f\colon A\to A$ by the formula:
\begin{equation}\label{equ.defiKoszul} 
\begin{split}
\Psi^0_f=\;&f(1),\\
\Psi^1_f(a)=\;&f(a)-f(1)a,\\
\Psi^2_f(a,b)=\;&
f(ab)-f(a)b-(-1)^{|a||b|}f(b)a+f(1)ab\\
\vdots\;\;&\\
\Psi^n_f(a_1,\ldots,a_n)=\;&
\sum_{k=0}^n\frac{(-1)^{n-k}}{k!(n-k)!}\sum_{\pi\in \Sigma_n}\epsilon(\pi)\; 
f(1\cdot a_{\pi(1)}\cdots a_{\pi(k)})a_{\pi(k+1)}\cdots a_{\pi(n)}\,,
\end{split}\end{equation}
where $\epsilon(\pi)$ is the Koszul sign of the permutation $\pi$ with respect to the sequence of homogeneous elements
$a_1,\ldots,a_n$. As proved in \cite{BDA,kravchenko2000,voronov} they have the remarkable property of 
satisfying the generalized Jacobi identities  of Lada and Stasheff \cite{LadaStas}, and therefore 
they are applied in the study 
of $L_{\infty}$-algebras, of (commutative) Batalin-Vilkovisky algebras and their deformations.

The question of extending their definition to every unitary graded associative algebra, preserving  generalized Jacobi identities, is a nontrivial task and has been first answered by Bering \cite{Bering} about ten years ago. Very recently, other solutions, quite different in their origin and presentation, are proposed  by Bandiera \cite{derived,kapranovbrackets} 
and by Manetti and Ricciardi \cite{GM}.

Apart from the natural question whether the above mentioned non-commutative extension formulas coincide or not, the main goal of this paper is to determine 
a minimal set of conditions which implies existence and unicity of 
non-commutative Koszul brackets.  

A similar goal has been recently achieved by Markl in the paper \cite{markl1}, 
where it is proved that both  hierarchies of  B\"{orjeson} brackets and 
commutative Koszul brackets are the unique natural hierarchies 
of brackets satisfying the technical conditions called hereditarity, recursivity and 
with fixed initial terms. 
Strictly speaking, this viewpoint does not apply to our goal since it is easy to see that a  
non-commutative hereditary extension of Koszul brackets cannot satisfy generalized Jacobi identities; however Markl's work has certainly inspired this paper. 

The point of view which we adopt in this paper is based on the slogan that 
``the most important properties for a hierarchy of brackets are naturality, base change and generalized  Jacobi formulas''. Precise definitions  will be given in Theorem~\ref{thm.unicity}; here we only mention that Markl's notion of naturality is essentially equivalent to the join of our notions of naturality and base change.  

Whereas in the previous literature on the subject the starting point is Koszul's definition in the commutative case, the middle point is the proposal of a non-commutative extension and the conclusive point is the proof of generalized Jacobi identities, in this paper 
we reverse the logical path: we start with a generic hierarchy 
satisfying naturality, base change and generalized Jacobi, and then we add assumptions on the  initial terms until we reach the  unicity. Quite surprisingly, this  
approach goes very smoothly and provides, in our opinion, a  simplification of the theory also 
when restricted to the classical commutative case.     
  
The paper is organized as follows: in Section~\ref{sec.setup} we fix notation and we recall the definition of the Nijenhuis-Richardson bracket, in terms of which the generalized Jacobi identities can be expressed in their simplest form. 
Section~\ref{sec.uniquenesstheorem} is completely devoted to the proof of the uniqueness theorem of Koszul brackets, whose first properties, including their restriction to the commutative case, are studied in Section~\ref{sec.firstproperties}. In Section~\ref{sec.beringbandiera} we shall prove that the 
non-commutative Koszul brackets may be also explicitly described by  the formulas given in \cite{derived,Bering}. The last section is devoted to a discussion about 
the reduction of  Koszul brackets to non-unitary  graded associative algebras.

\subsection*{Acknowledgments} 
The author thanks the referee for several useful comments and acknowledges partial support by Italian MIUR under PRIN project 2012KNL88Y ``Spazi di moduli e teoria di Lie''.

\bigskip
\section{General setup}
\label{sec.setup}

The symmetric group of permutations of $n$ elements is denoted by $\Sigma_n$. Every graded vector space, every graded algebra and every tensor product is intended $\Z$-graded and
over a fixed field $\K$ of characteristic 0. For every graded vector space $V$
we shall denote by $V^{\odot n}$ its $n$th symmetric power: for simplicity of notation we always identify a linear map 
$f\colon V^{\odot n}\to W$ with the corresponding graded symmetric operator 
\[f\colon \underbrace{V\times\cdots\times V}_{n\text{ factors}}\to W,\qquad f(v_1,\ldots,v_n)=f(v_1\odot\cdots\odot v_n).\]
 
To every graded vector space $V$ we shall consider the following  graded Lie  algebras: 
\begin{enumerate}

\item the algebra of linear endomorphisms: 
\[\End^*(V)=\Hom^*_{\K}(V,V)=\bigoplus_{n\in\Z}\Hom^n_{\K}(V,V)\,,\]
equipped with the graded commutator bracket;

\item the space  of  affine  endomorphisms:
\[\Aff^*(V)=\bigoplus_{n\in\Z}\Aff^n(V)=\{f\in \End^*(V\oplus \K)\mid f(V\oplus \K)\subseteq V\}\,,\]
considered as a graded  Lie subalgebra of $\End^*(V\oplus \K)$.
\end{enumerate} 
Thus, giving an element  $f\in \Aff^n(V)=\{f\in \End^n(V\oplus \K)\mid f(V\oplus \K)\subseteq V\}$ is the same as giving a linear map $g_i\colon V^i\to V^{i+n}$, $g_i(v)=f(v)$, for every $i\not=0$ and an affine map $g_0\colon V^0\to V^n$, $g_0(v)=f(v+1)$.

It is useful  to consider both $\End^*(V)$ and $\Aff^*(V)$ as graded Lie subalgebras of 
\[D(V)=\prod_{n\ge -1}D_n(V),\qquad D_{-1}(V)=V,\quad D_n(V)=\Hom^*_{\K}(V^{\odot n+1},V),\]
where the Lie structure on $D(V)$ is given by  the Nijenhuis-Richardson bracket,  
induced by the right pre-Lie product 
$\barwedge$
defined in the following way \cite{NijRich67}:
given $f\in D_n(V)$ and $g\in D_m(V)$ the operator  
\[ f\barwedge g\in D_{n+m}(V)\]
is equal to:
\begin{enumerate} 

\item $f\barwedge g=0$ whenever $f\in D_{-1}(V)=V$;

\item $f\barwedge g
(v_{1},\ldots, v_{n})=f(g,v_{1},\ldots, v_{n})$ whenever $g\in D_{-1}(V)=V$;

\item when $n,m\ge 0$ we have
\[ f\barwedge g
(v_{0},\ldots, v_{n+m})=\!\!\!
\sum_{\sigma\in S(m+1,n)}\epsilon(\sigma)
f(g(v_{\sigma(0)},\ldots, v_{\sigma(m)}), v_{\sigma(m+1)},\ldots, 
v_{\sigma(m+n)})\,.\]
\end{enumerate}

Here $S(m+1,n)\subset\Sigma_{n+m+1}$ is the set of shuffles of type $(m+1,n)$, i.e., the set of permutations $\sigma$ of $0,\ldots,n+m$ 
such that $\sigma(0)<\cdots<\sigma(m)$ and $\sigma(m+1)<\cdots<\sigma(m+n)$. The Koszul sign 
$\epsilon(\sigma)$ is equal to $(-1)^{\alpha}$, where $\alpha$ is the number of pairs $(i,j)$ such that 
$i<j$, $\sigma(i)>\sigma(j)$ and $|v_i||v_j|$ is odd.
The Nijenhuis-Richardson bracket is defined as the graded commutator of $\barwedge$:
\[ [f,g]=f\barwedge g-(-1)^{|f||g|}g\barwedge f\,. \]

Notice that, since $[D_i(V),D_j(V)]\subseteq D_{i+j}(V)$ we have 
that $D_0(V),D_{-1}(V)\times D_0(V)$ and $D_{\ge 0}(V)=\prod_{n\ge 0}D_n(V)$ are graded Lie subalgebras of $D(V)$; notice also that $[f,Id_V]=n f$ for every $f\in D_n(V)$, where $Id_V$ is the identity on $V$.

By definition $\End^*(V)=D_0(V)$ and there exists a natural isomorphism of graded Lie algebras  
$\Aff^*(V)\cong D_{-1}(V)\times D_0(V)$, where  every pair $(x,f)\in D_{-1}(V)\times D_0(V)$ corresponds to the linear map 
\[ (x,f)\colon V\oplus \K\to V, \qquad (x,f)(v+t)=f(v)+tx,\qquad v\in V,\; t\in \K\;.\]

\begin{remark}\label{rem.NRvsGerstenhaber}
It is well known, and in any case easy to prove, that the  Nijenhuis-Richardson product $\barwedge$ is the symmetrization of the Gerstenhaber product
\[ \Hom^*_{\K}(V^{\otimes p-n+1},V)\times \Hom^*_{\K}(V^{\otimes n+1},V)\xrightarrow{\;\circ\;}
\Hom^*_{\K}(V^{\otimes p+1},V),\]
\[ f\circ g(v_0,\ldots,v_{p})=\sum_{i=0}^{p-n}(-1)^{|g|(|v_0|+\cdots+|v_{i-1}|)} 
f(v_0,\ldots,v_{i-1},g(v_{i},\ldots,v_{i+n}),v_{i+n+1},\ldots,v_{p}).\]
More precisely, denoting by $N\colon V^{\odot n+1}\to V^{\otimes n+1}$ the  map 
\[ N(v_0\odot\cdots\odot v_n)=\sum_{\sigma\in \Sigma_{n+1}}\epsilon(\sigma) 
v_{\sigma(0)}\otimes \cdots\otimes v_{\sigma(n)}\,,\]
we have
$(f\circ g)N=fN\barwedge gN$.
\end{remark}

\begin{remark} Although not relevant for this paper, it is useful to point out that 
the graded Lie algebra $D(V)$ is naturally isomorphic to the graded Lie algebra of coderivations of the symmetric coalgebra $S^c(V)=\bigoplus_{n\ge 0}V^{\odot n}$. The isomorphism 
\[ \Coder^*(S^c(V),S^c(V))\to D(V)\cong\Hom^*_{\K}(S^c(V),V)\]
is induced by taking composition  with the projection map $S^c(V)\to V$, see e.g., 
\cite{K,LadaStas,sta93}.
\end{remark}

\begin{definition}
For  a unitary graded associative algebra $A$ we 
consider the sequence of maps 
$\mu_n\in D_n(A)$, $n\ge -1$:
\[ \mu_{-1}=1,\quad \mu_0=Id_A,\qquad \mu_n(a_0,\ldots,a_n)=\frac{1}{(n+1)!}\sum_{\sigma\in \Sigma_{n+1}}\epsilon(\sigma) 
a_{\sigma(0)}a_{\sigma(1)}\cdots a_{\sigma(n)}\,.\] 
\end{definition}

When $A$ is graded commutative we recover the multiplication maps $\mu_n(a_0,\ldots,a_n)=a_0\cdots a_n$.
When the algebra $A$ is clear from the context, we shall simply denote by $Id$ the identity map 
$Id_A\colon A\to A$.   In order to avoid possible confusion with the Nijenhuis-Richardson bracket we shall denote the graded commutator of $a,b\in A$ by $\{a,b\}=ab-(-1)^{|a||b|}ba\in A$  .

The following lemma is a straightforward consequence of Remark~\ref{rem.NRvsGerstenhaber}.

\begin{lemma}\label{lem.commutazionemuenne} 
In the above setup,  for every $n,m\ge -1$ we have  
\[\mu_n\barwedge \mu_m=\dbinom{n+m+1}{m+1}\mu_{n+m},\qquad
[\mu_n,\mu_m]=(n-m)\frac{(n+m+1)!}{(n+1)!(m+1)!}\mu_{n+m}\;.\]
\end{lemma}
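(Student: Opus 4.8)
The plan is to read both formulas off Remark~\ref{rem.NRvsGerstenhaber}, which presents $\barwedge$ as the symmetrization of the Gerstenhaber product $\circ$. First I would introduce, for each $k\ge 0$, the (non-symmetric) iterated multiplication
\[p_k\in\Hom^0_{\K}(A^{\otimes k+1},A),\qquad p_k(a_0\otimes\cdots\otimes a_k)=a_0a_1\cdots a_k,\]
and record the elementary identity $p_kN=(k+1)!\,\mu_k$, which follows by comparing the definition of $N$ with that of $\mu_k$: applying $p_k$ to $N(a_0\odot\cdots\odot a_k)=\sum_{\sigma\in\Sigma_{k+1}}\epsilon(\sigma)\,a_{\sigma(0)}\otimes\cdots\otimes a_{\sigma(k)}$ reproduces $(k+1)!$ times the symmetrized product defining $\mu_k$.

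The key step is to compute the Gerstenhaber product $p_n\circ p_m$. Since each $p_m$ is a degree-zero operator, every Koszul sign in the formula of Remark~\ref{rem.NRvsGerstenhaber} is trivial, so
\[p_n\circ p_m(v_0,\ldots,v_{n+m})=\sum_{i=0}^{n}v_0\cdots v_{i-1}\,(v_i\cdots v_{i+m})\,v_{i+m+1}\cdots v_{n+m}.\]
By associativity each summand collapses to the full product $v_0v_1\cdots v_{n+m}=p_{n+m}(v_0,\ldots,v_{n+m})$, independently of $i$; as there are $n+1$ of them, $p_n\circ p_m=(n+1)\,p_{n+m}$. Applying $N$ and combining $(f\circ g)N=fN\barwedge gN$ with $p_kN=(k+1)!\,\mu_k$ gives
\[(n+1)!\,(m+1)!\,(\mu_n\barwedge\mu_m)=(p_n\circ p_m)N=(n+1)\,(n+m+1)!\,\mu_{n+m},\]
and dividing yields $\mu_n\barwedge\mu_m=\dfrac{(n+m+1)!}{n!\,(m+1)!}\,\mu_{n+m}=\dbinom{n+m+1}{m+1}\mu_{n+m}$, as claimed.

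For the bracket I would use that each $\mu_k$ is a degree-zero element of $D(A)$, so $[\mu_n,\mu_m]=\mu_n\barwedge\mu_m-\mu_m\barwedge\mu_n$ with no sign; substituting the previous formula and rewriting the two binomials as $\dbinom{n+m+1}{m+1}=\frac{(n+m+1)!}{(n+1)!(m+1)!}(n+1)$ and $\dbinom{n+m+1}{n+1}=\frac{(n+m+1)!}{(n+1)!(m+1)!}(m+1)$ produces the factor $(n-m)$ and the stated coefficient. It remains to dispose of the boundary cases $n=-1$ or $m=-1$, which fall outside the scope of the product formula of Remark~\ref{rem.NRvsGerstenhaber} and must be treated directly: for $n=-1$ both sides vanish by the first defining case of $\barwedge$ (with the convention $\mu_{-2}=0$), while for $m=-1$ the second defining case gives $\mu_n\barwedge\mu_{-1}=\mu_n(1,-,\ldots,-)$, and unitality of $A$ together with $|1|=0$ collapses this to $\mu_{n-1}=\dbinom{n}{0}\mu_{n-1}$. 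The only genuine care needed is the sign-and-factorial bookkeeping—verifying that the degree-zero nature of $p_m$ kills every Koszul sign in $p_n\circ p_m$ and that $p_kN=(k+1)!\,\mu_k$—after which the conclusion is purely mechanical.
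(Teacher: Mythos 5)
Your proposal is correct and follows exactly the route the paper intends: the paper gives no written proof, stating the lemma as a ``straightforward consequence'' of Remark~\ref{rem.NRvsGerstenhaber}, and your computation (symmetrizing $p_n\circ p_m=(n+1)p_{n+m}$ via $p_kN=(k+1)!\,\mu_k$) is precisely that consequence spelled out. Your explicit treatment of the boundary cases $n=-1$ and $m=-1$, which fall outside the remark's formula, is a welcome detail the paper leaves implicit.
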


\bigskip
\section{The uniqueness theorem}
\label{sec.uniquenesstheorem}

\begin{theorem}\label{thm.unicity}  
There exists a unique way to assign to every  unitary graded associative algebra $A$  a morphism of graded vector spaces 
\[ \Psi\colon\Aff^*(A)\to D(A),\qquad x\mapsto \Psi_x=\sum_{n=0}^{\infty} \Psi^n_x,\quad \Psi^n_x\in D_{n-1}(A),\] 
such that the following conditions are satisfied:
\begin{enumerate}

\item \emph{generalized Jacobi:} $\Psi$ is a morphism of graded Lie algebras;

\item \emph{naturality:} for every morphism   
$\alpha\colon A\to B$ of unitary graded algebras, for every $x\in A$ and every pair of linear maps $f\colon A\to A$, $g\colon B\to B$ such that $g\alpha=\alpha f$, we have
\[ \alpha\Psi^n_x=\Psi^n_{\alpha(x)}\alpha^{\odot n},\quad
\alpha\Psi^n_f=\Psi^n_g\alpha^{\odot n}\quad\colon A^{\odot n}\to B\;.\]

\item \emph{base change}:  the operators $\Psi^n_1,\Psi^n_{Id}$ are multilinear over the centre of $A$. More precisely, if $c\in A$ is homogeneous and $ac=(-1)^{|a||c|}ca$ for every homogeneous $a\in A$, then 
\[ \qquad\Psi^n_1(a_1,\ldots,a_nc)=\Psi^n_1(a_1,\ldots,a_n)c,\qquad 
\Psi^n_{Id}(a_1,\ldots,a_nc)=\Psi^n_{Id}(a_1,\ldots,a_n)c,
\]
for every $a_1,\ldots,a_n$.

\item \emph{initial terms:} for every $x\in A$, $f\in \End^*(A)$, we have
\[\Psi^0_x=x,\qquad \Psi^0_f=f(1)\;.\]

\item \emph{gauge fixing:} for $A=\K$ we have $\Psi^n_{Id}=0$ for every $n>0$.
\end{enumerate}
\end{theorem}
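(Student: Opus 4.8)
The plan is to prove uniqueness first and then existence, since uniqueness will essentially dictate the explicit form of the brackets and thereby organize the existence construction. For uniqueness, I would exploit the fact that conditions (1)–(5) rigidly determine each $\Psi^n_x$ and $\Psi^n_f$ by induction on $n$. The engine is condition (1): since $\Psi$ is a Lie algebra morphism and $\Aff^*(A)$ decomposes (via the isomorphism $\Aff^*(A)\cong D_{-1}(A)\times D_0(A)$ recalled above) into the pieces $D_{-1}(A)=A$ and $D_0(A)=\End^*(A)$, the bracket relations in $\Aff^*(A)$ are transported to bracket relations in $D(A)$. In particular, for $x\in A=D_{-1}(A)$ and $f\in\End^*(A)=D_0(A)$ one has $[f,x]=f(x)\in A$ inside $\Aff^*(A)$, so applying $\Psi$ gives $[\Psi_f,\Psi_x]=\Psi_{f(x)}$; comparing the $D_{n-1}(A)$ components and using Lemma~\ref{lem.commutazionemuenne} together with the initial terms $\Psi^0_x=x$, $\Psi^0_f=f(1)$ yields recursive formulas expressing $\Psi^{n+1}$ in terms of lower brackets and the multiplication operators $\mu_n$.

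The key observation is that the identity $Id_A\in\End^*(A)$ plays the role of a grading operator: since $[f,Id_V]=nf$ for $f\in D_n(V)$, and since $Id_A=\mu_0$ generates degree, the element $\Psi_{Id}$ acts as a counting device that peels off homogeneous components. Concretely, I would use the relation obtained by applying $\Psi$ to $[Id,x]=x$ in $\Aff^*(A)$ to pin down $\Psi_{Id}$, and the gauge fixing condition (5) on $A=\K$ to kill the ambiguity that would otherwise remain in the ``diagonal'' part $\Psi^n_{Id}$. The naturality condition (2) then propagates the values computed on free or universal algebras to arbitrary $A$: given any elements $a_1,\dots,a_n\in A$, one factors through a map $\alpha$ from a free associative algebra, where the brackets are forced, and base change (3) handles the multilinearity needed to reduce the number of free generators that must be tracked. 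I expect this interplay—naturality reducing everything to a universal computation, base change controlling centre-linearity, and the Jacobi recursion fixing the coefficients via Lemma~\ref{lem.commutazionemuenne}—to give uniqueness cleanly once the bookkeeping of Koszul signs is set up correctly.

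For existence, I would reverse the logic: define $\Psi$ by the formulas that the uniqueness argument forces, namely set $\Psi_x$ and $\Psi_f$ through an explicit expression built from the $\mu_n$ (the non-commutative analogues of \eqref{equ.defiKoszul}), and then verify conditions (1)–(5) directly. Naturality, base change, initial terms and gauge fixing should be immediate from the defining formula, since each is a structural property of the $\mu_n$ that is manifestly preserved. The substance lies in verifying the generalized Jacobi identity, i.e.\ that $\Psi$ is a genuine Lie algebra morphism; here I would lean on the bracket computation $[\mu_n,\mu_m]=(n-m)\tfrac{(n+m+1)!}{(n+1)!(m+1)!}\mu_{n+m}$ from Lemma~\ref{lem.commutazionemuenne} and on Remark~\ref{rem.NRvsGerstenhaber}, which identifies $\barwedge$ as the symmetrization of the Gerstenhaber product, to reduce the Jacobi check to an identity among associative-algebra multiplications.

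The main obstacle, I expect, is precisely this verification of condition (1) on the nose: the associativity of $A$ must be shown to yield the Nijenhuis–Richardson bracket identity $[\Psi_x,\Psi_y]$, $[\Psi_f,\Psi_g]$, and $[\Psi_f,\Psi_x]$ in full, including all cross terms and Koszul signs, rather than just the commutative simplification. The difficulty is that, unlike the commutative Koszul brackets, here the operators $\mu_n$ are genuine symmetrizations that do \emph{not} telescope as products, so the combinatorial cancellations require the full strength of $\mu_n\barwedge\mu_m=\binom{n+m+1}{m+1}\mu_{n+m}$ rather than a naive Leibniz expansion. I would isolate this as the core lemma, proving that the map $\Psi$ restricted to $D_{-1}(A)\oplus D_0(A)\subset\Aff^*(A)$ intertwines brackets, and then argue that naturality plus the recursion already established in the uniqueness half force the identity on all of $\Aff^*(A)$ without further sign gymnastics.
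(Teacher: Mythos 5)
Your uniqueness half is, in outline, the paper's own argument: gauge fixing plus the initial terms give $\Psi_{Id}=\mu_{-1}$ for $A=\K$; the relation $[\Psi_{Id},\Psi_1]=\Psi_{[Id,1]}=\Psi_1$ then forces $\Psi_1=\sum_{n\ge 0}(-1)^n\mu_{n-1}$ there; base change extends these formulas to $\K[t]$, and polarization through $\alpha\colon \K[t]\to A[t_1,\ldots,t_n]$, $t\mapsto\sum a_it_i$, extends them to every $A$; finally the relations $[Id,a]=a$, $[f,a]=f(a)$, $[f,Id]=0$ produce the recursions of Theorem~\ref{thm.recursive}. One caution on your phrasing: base change is postulated only for $\Psi^n_1$ and $\Psi^n_{Id}$, and naturality needs an intertwining pair $(f,g)$, so the ``reduce to a universal algebra'' step can pin down \emph{only} $\Psi_1$ and $\Psi_{Id}$; every other bracket must then come out of the Lie-morphism recursion, not out of naturality. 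Your sketch blurs this distinction, but the skeleton is correct.

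The genuine gap is in the existence half. You propose to define $\Psi$ by the explicit (forced) formulas and then verify condition (1) directly, reducing it via Lemma~\ref{lem.commutazionemuenne} and Remark~\ref{rem.NRvsGerstenhaber} to ``an identity among associative-algebra multiplications''. That verification is precisely the hard content of the theorem --- it is Bering's theorem \cite{Bering}, whose proof is a substantial combinatorial computation --- and you never carry it out; declaring it a ``core lemma'' does not discharge it. Your fallback, that ``naturality plus the recursion already established in the uniqueness half force the identity on all of $\Aff^*(A)$'', is circular: the recursion was \emph{derived} under the hypothesis that $\Psi$ is a morphism of graded Lie algebras, so it proves that at most one such morphism exists, not that the candidate it defines is one. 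The paper avoids the direct verification altogether: it sets
\[ \widehat{\Psi}_u=\exp\left(\left[-,\sum_{n\ge 1}K_n\mu_n\right]\right)\exp([-,\mu_{-1}])\,u\,,\]
a composition of exponentials of inner derivations of $D(A)$ (well defined because $[-,\mu_{-1}]$ lowers and $[-,\mu_n]$ raises the degree in $D(A)=\prod_n D_n(A)$), hence \emph{automatically} an isomorphism of graded Lie algebras; naturality, base change and the initial terms are immediate because each $\mu_n$ is natural and multilinear over the centre, and the gauge-fixing condition on $A=\K$ then determines the coefficients $K_n$ recursively. In that construction condition (1) costs nothing, and this is exactly the idea your proposal is missing: to repair it you must either actually prove the generalized Jacobi identities for your explicit formulas (Bering's computation) or replace your definition by an automorphism-based one of this type.
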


\begin{proof} We   identify  
$\Aff^*(A)$ with the graded Lie subalgebra $D_{-1}(A)\times D_0(A)\subset D(A)$;
for our goals it is convenient to prove the existence following the ideas  of 
\cite{kapranovbrackets,GM}. 
Notice first that every operator $\mu_n$ is multilinear over the centre of $A$ and commutes 
with  morphisms of unitary 
graded associative algebras.
Next, for every sequence $K_1,K_2,\ldots$ of rational numbers, the map 
\[ \widehat{\Psi}\colon D(A)\to D(A),\qquad 
\widehat{\Psi}_u=\exp\left(\left[-,\sum_{n=1}^{\infty}K_n \mu_n
\right]\right)\exp([-,\mu_{-1}])u,\]  
is an  isomorphism of graded Lie algebras which is compatible with morphisms of unitary graded algebras and gives the required initial terms. 
A simple recursive argument shows that the gauge fixing condition 
\[A=\K,\qquad \widehat{\Psi}_{\mu_0}=\exp\left(\left[-,\sum_{n=1}^{\infty}K_n \mu_n
\right]\right)(\mu_0+\mu_{-1})=\mu_{-1},\]
can be written as 
\[\exp\left(\left[\sum_{n=1}^{\infty}K_n \mu_n, -
\right]\right)\mu_{-1}=\mu_{-1}+\mu_0,\]
and determines uniquely the coefficients $K_n$. The first terms are: 
\[K_1=1,\quad K_2=-\frac{1}{2},\quad  K_3=\frac{1}{2},\quad K_4=\-\frac{2}{3},\quad 
K_5=\frac{11}{12},\quad K_6=-\frac{3}{4},\quad  K_7=-\frac{11}{6},\;\ldots\,.\]
According to \cite{GM}, the formal power series 
\[ \sum_{n\ge 1}K_n\frac{t^{n+1}}{(n+1)!}\in \Q[[t]]\]
is the iterative logarithm of $e^t-1$, cf. \cite{AB}, and the sequence $K_n$ 
may be also computed recursively by the linear equations
\[ K_1=1,\qquad K_n=\frac{-2}{(n+2)(n-1)}\sum_{i=1}^{n-1} \stirl{n+1}{i}K_i\;,\]
where $\stirl{n+1}{i}$ are the Stirling numbers of the second kind.
It is now sufficient to define $\Psi$ as the restriction  $\widehat{\Psi}$ to the  graded Lie subalgebra 
$D_{-1}(A)\times D_0(A)$. 

\medskip

Let us now prove the unicity, the first step is to prove, for every algebra $A$, the formulas: 
\begin{equation}\label{equ.formulapsiuno}
\Psi_1=\sum_{n\ge 0}(-1)^n\mu_{n-1},\qquad \Psi_{Id}=\mu_{-1}\,.
\end{equation}
Assume first $A=\K$, then $\mu_n$ is a generator of $D_n(\K)$  and therefore there exists a sequence $s_0,s_1,\ldots$ in $\K$ such that
\[ \Psi_1=\sum_{n\ge 0}s_n\mu_{n-1}\,,\] 
where $s_0=1$ by the initial terms condition. Using the relation
$[\Psi_{Id},\Psi_1]=\Psi_{[Id,1]}=\Psi_1$ we obtain $\Psi^n_1=[\mu_{-1},\Psi^{n+1}_1]$ for every $n\ge -1$ and then

\[ s_{n}\mu_{n-1}=[\mu_{-1},s_{n+1}\mu_n]=- s_{n+1}\mu_{n-1},\qquad 
s_{n+1}=-s_n=(-1)^{n+1}\;.\]
Consider now the polynomial algebra $\K[t]$, with $t$ a central element of degree $0$: by the base change property
\[ \begin{split}
\Psi^n_1(t^{i_1},\ldots,t^{i_n})&=\Psi^n_1(1,\ldots,1)t^{i_1+\cdots+i_n}=(-1)^n\mu_n(t^{i_1},\ldots,t^{i_n}),\\
\Psi^n_{Id}(t^{i_1},\ldots,t^{i_n})&=\Psi^n_{Id}(1,\ldots,1)t^{i_1+\cdots+i_n}\,,
\end{split}\]
and then \eqref{equ.formulapsiuno} holds for $\K[t]$.
The passage from $\K[t]$ to any  $A$ is done by using the  standard polarization trick: given a finite sequence of homogeneous elements 
$a_1,\ldots,a_n\in A$ we consider the  algebra 
\[B=A[t_1,\ldots,t_n],\] 
where every $t_i$ is a central indeterminate of degree $|t_i|=-|a_i|$.
We have a morphism of unitary associative algebras 
\[ \alpha\colon \K[t]\to B,\qquad \alpha(t)=a_1t_1+\cdots+a_nt_n\,,\]
which by naturality gives  
\[ 
\Psi^{n}_{1}(\alpha(t),\ldots,\alpha(t))=\alpha\Psi^{n}_{1}(t,\ldots,t)=
\alpha((-1)^{n}t^n)
=(-1)^{n}\left(\sum_{i=1}^n a_it_i\right)^n,\]
while by  symmetry 
\[ \Psi^{n}_{1}(\alpha(t),\ldots,\alpha(t))=n!\Psi_1^n(a_1t_1,\ldots, a_nt_n)\,.\]
Looking at the coefficients of $t_1\cdots t_n$, in the first case we get  
\[ n!(-1)^{n}\mu_{n-1}(a_1t_1,\ldots, a_nt_n)
=n!(-1)^{n}(-1)^{\sum_{i<j}|e_i||a_j|}\,\mu_{n-1}(a_1,\ldots,a_n) t_1\cdots t_n,\]
whereas  in the second case, by base change property, we get  
\[ n! \,(-1)^{\sum_{i<j}|e_i||a_j|}\Psi^{n}_{1}(a_1,\ldots,a_n) t_1\cdots t_n,\]
and this concludes the proof of the first part of \eqref{equ.formulapsiuno}; the equality 
$\Psi_{Id}=\mu_{-1}$ is proved in the same way.
The map
\[ \Phi\colon\Aff^*(A)\to D(A),\qquad \Phi_u=\Psi_u-\Psi_{[u,1]}\;.\]
is a morphism of graded Lie algebras, since it is the composition of 
$\Psi$ with the Lie isomorphism 
\[\exp([\mu_{-1},-])\colon D_{-1}(A)\times D_0(A)\to D_{-1}(A)\times D_0(A)\;.\] 
In other words, for $a\in A$ and $f\colon A\to A$ we have 
\[ \Phi_a=\Psi_a,\qquad \Phi_f=\Psi_f-\Psi_{f(1)},\qquad 
\Psi_f=\Phi_f+\Phi_{f(1)}\,,\] 
and 
in particular
\begin{equation}\label{equ.phidellidentita} 
\Phi_{Id}=\Psi_{Id}-\Psi_1=\sum_{n\ge 0}(-1)^n\mu_n\,.
\end{equation} 

For every $a\in A$, the relation $[Id,a]=a$ gives
\[ \Phi_a=[\Phi_{Id},\Phi_a],\qquad \Phi_a^n=\sum_{h\ge 0}(-1)^h[\mu_h,\Phi_a^{n-h}]\] 
which, together the condition $\Phi^0_a=\Psi^0_a=a$ implies  
\begin{equation}
\Phi^0_a=a,\qquad \Phi_a^n=\frac{1}{n}\sum_{h=1}^n(-1)^{h+1}[\Phi_a^{n-h},\mu_h]\,.
\end{equation}

For every $f\colon A\to A$, we have 
$\Phi^0_f=\Psi^0_f-\Psi^0_{f(1)}=0$ and then, for every $a\in A$, the relation $\Phi_{[f,a]}=[\Phi_f,\Phi_a]$ gives 
\[ f(a)=\Phi^0_{[f,a]}=[\Phi^1_f,a]=\Phi^1_f(a)\]
proving that $\Phi^1_f=f$.
Moreover, the relation $[f,Id]=0$ gives 
\[\left[\Phi_f,\sum_{h\ge 0}(-1)^h\mu_h\right]=0\] 
and then the recursive formula
\begin{equation}
\Phi^0_f=0,\quad \Phi^1_f=f,\qquad \Phi^{n+1}_f=\frac{1}{n}\sum_{h=1}^n(-1)^{h+1}[\Phi^{n-h+1}_f,\mu_h]\;.
\end{equation} 
The proof of the unicity is complete.\end{proof}

For reference purposes it is convenient to collect as  a separate result the recursive formulas 
obtained in the proof of  Theorem~\ref{thm.unicity}.

\begin{theorem}\label{thm.recursive} 
The higher brackets $\Psi^n,\Phi^n$ are determined by the following recursive formulas: for every graded unitary associative algebra $A$,  every $x\in A$ and 
every $f\in \Hom^*_{\K}(A,A)$ we have 
\[ \Psi_x^n=\Phi_x^n,\qquad \Psi^n_f=\Phi^n_f+\Phi^n_{f(1)},\]
where 
\[\begin{split} \Phi^0_x=x,\qquad \Phi_x^n&=\frac{1}{n}\sum_{h=1}^n(-1)^{h+1}[\Phi_x^{n-h},\mu_h]\,,\\
\Phi^0_f=0,\qquad \Phi^1_f=f,\qquad \Phi^{n+1}_f&=\frac{1}{n}\sum_{h=1}^n(-1)^{h+1}[\Phi^{n-h+1}_f,\mu_h]\,.\end{split}\]
\end{theorem}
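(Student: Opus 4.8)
The final statement, Theorem~\ref{thm.recursive}, is essentially a bookkeeping consequence of the uniqueness proof just completed, so my plan is to extract the relevant displayed equations rather than prove anything genuinely new. The recursive formulas for $\Phi^n_x$ and $\Phi^{n+1}_f$ appear verbatim as numbered equations in the proof of Theorem~\ref{thm.unicity}, and the relation $\Psi^n_f=\Phi^n_f+\Phi^n_{f(1)}$ is recorded there as well; the only genuine task is to verify that these formulas, read as a recursion, actually determine the brackets uniquely and that they agree with the $\Psi$ constructed in the existence half.

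First I would recall the auxiliary map $\Phi_u=\Psi_u-\Psi_{[u,1]}$ introduced in the uniqueness argument, noting that it is obtained from $\Psi$ by composing with the graded Lie automorphism $\exp([\mu_{-1},-])$, hence is itself a morphism of graded Lie algebras. The passage $\Psi_x=\Phi_x$ and $\Psi_f=\Phi_f+\Phi_{f(1)}$ then follows immediately from the definition of $\Phi$ together with $\Phi_{f(1)}=\Psi_{f(1)}$, since $f(1)\in A=D_{-1}(A)$ and $[f(1),1]=0$. This reduces everything to establishing the two recursions for $\Phi$.

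For the $x$-recursion I would use the identity $[\mathit{Id},x]=x$ in $\Aff^*(A)$, apply the Lie morphism $\Phi$, and invoke \eqref{equ.phidellidentita} which identifies $\Phi_{\mathit{Id}}=\sum_{h\ge 0}(-1)^h\mu_h$. Expanding $\Phi_x=[\Phi_{\mathit{Id}},\Phi_x]$ degree by degree, isolating the term with $h=0$ (where $[\mu_0,\Phi^n_x]=n\Phi^n_x$ by the relation $[f,\mathit{Id}]=nf$ noted after the definition of $D(V)$), and solving for $\Phi^n_x$ yields the stated formula; the initial condition $\Phi^0_x=x$ comes from the initial-terms hypothesis. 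For the $f$-recursion the same strategy applies to the relation $[f,\mathit{Id}]=0$: applying $\Phi$ gives $[\Phi_f,\Phi_{\mathit{Id}}]=0$, and the same degreewise extraction produces the recursion, with the anchoring facts $\Phi^0_f=0$ and $\Phi^1_f=f$ established in the uniqueness proof via $[f,a]$.

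The only point requiring care—and what I would flag as the main obstacle—is the antisymmetrization of the bracket index: in the $x$-case the summand at $h$ involves $[\mu_h,\Phi^{n-h}_x]$ whereas the final formula is written with $[\Phi^{n-h}_x,\mu_h]$, so I would track the sign $(-1)^{|f||g|}$ in the Nijenhuis--Richardson commutator carefully, using that $\mu_h\in D_h(A)$ has the appropriate parity, to confirm the two forms agree and that the overall normalization by $\tfrac1n$ is correct. Once the signs are reconciled, uniqueness of the solution to each recursion is clear since the leading term at each degree determines $\Phi^n_x$ (resp.\ $\Phi^{n+1}_f$) explicitly in terms of strictly lower ones, completing the proof.
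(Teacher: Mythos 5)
Your proposal is correct and takes essentially the same route as the paper: the theorem is stated there precisely as a collection of the formulas already derived in the uniqueness part of Theorem~\ref{thm.unicity}, and your steps—recalling $\Phi_u=\Psi_u-\Psi_{[u,1]}$ and its Lie-morphism property, deducing $\Psi_x=\Phi_x$ and $\Psi_f=\Phi_f+\Phi_{f(1)}$, then extracting the recursions degreewise from $\Phi_x=[\Phi_{Id},\Phi_x]$ and $[\Phi_f,\Phi_{Id}]=0$ using $[\mu_0,\Phi^n]=-(n-1)\Phi^n$—are exactly the paper's argument. The sign issue you flag is harmless: the Koszul sign in the Nijenhuis--Richardson commutator is governed by the internal degree, and each $\mu_h$ has internal degree $0$, so $[\mu_h,\Phi]=-[\Phi,\mu_h]$ and the $(-1)^{h+1}$ and the factor $\tfrac{1}{n}$ come out as stated.
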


We shall prove in Proposition~\ref{prop.centralcase} that  
if $A$ is graded commutative then the operators  
$\Psi^n_f$ reduce to  the usual Koszul brackets as defined in \cite{koszul85}. Similarly the operators 
$\Phi^n_f$  are the higher brackets defined in \cite{akman,BDA} and called Koszul braces in \cite{markl1,markl2}.  
We shall refer to  the operators $\Psi^n$ as \emph{Koszul brackets} and to the operators
$\Phi^n$  as \emph{reduced Koszul brackets}.

\bigskip
\section{Examples and first properties of Koszul brackets}
\label{sec.firstproperties}

The brackets $\Phi^n,\Psi^n$ for low values of $n$  
can be easily computed by using the recursive formulas of Theorem~\ref{thm.recursive}.
For every $x\in A$ we have: 
\[ \begin{split}
\Psi^0_x=\Phi^0_x&=x,\\
\Psi^1_x=\Phi^1_x&=[x,\mu_1],\\
\Psi^2_x=\Phi^2_x&=\frac{1}{2}[[x,\mu_1],\mu_1]-\frac{1}{2}[x,\mu_2],\\
\Psi_x^3=\Phi^3_x&=\frac{1}{6}[[[x,\mu_1],\mu_1],\mu_1]-\frac{1}{6}[[x,\mu_2],\mu_1]
-\frac{1}{3}[[x,\mu_1],\mu_2]+\frac{1}{3}[x,\mu_3]\,.
\end{split}\]
For every $f\in \Hom^*_{\K}(A,A)$ we have: 
\[ \begin{split}
\Phi^1_f&=f,\\
\Phi^2_f&=[f,\mu_1],\\
\Phi^3_f&=\frac{1}{2}[[f,\mu_1],\mu_1]-\frac{1}{2}[f,\mu_2],\\
\Phi^4_f&=\frac{1}{6}[[[f,\mu_1],\mu_1],\mu_1]-\frac{1}{6}[[f,\mu_2],\mu_1]
-\frac{1}{3}[[f,\mu_1],\mu_2]+\frac{1}{3}[f,\mu_3]\,.\qquad
\end{split}\]
In the commutative case, the above formulas for $\Phi^2_f$ and $\Phi^3_f$ were already observed in \cite{FMpoisson}.
In a more explicit way, for  $a\in A$ we have: 
\[\begin{split} 
\Phi^1_x(a)&=\Phi^1_x(a)=-\frac{1}{2}(xa+(-1)^{|x||a|}ax),\\
\Phi^1_f(a)&=f(a),\qquad
\Psi^1_f(a)=f(a)-\frac{1}{2}(f(1)a+(-1)^{|f||a|}af(1))
\,.\end{split}\]
For  $x,a,b\in A$ we have:
\[ \Phi^2_x(a,b)=\frac{h(a,b)+(-1)^{|a||b|}h(b,a)}{2},\quad 
h(a,b)=\frac{xab+(-1)^{|a||x|}4axb+(-1)^{|x||ab|}abx}{6},\]
which can be written  in the  form:
\[ \Phi^2_x(a,b)=\frac{1}{12}(xab+(-1)^{|a||x|}4axb+(-1)^{|x||ab|}abx)+(-1)^{|a||b|}(a\rightleftarrows b).\]
In a similar way, for every $f\in \Hom^*_{\K}(A,A)$ and every $a,b\in A$ we get:
\[\begin{split} 
\Phi^2_f(a,b)=\;&\frac{f(ab)-f(a)b-(-1)^{|f|\,|a|}a f(b)}{2}+(-1)^{|a||b|}(a\rightleftarrows b)\,,\\
&\\
\Psi^2_f(a,b)=\;&\frac{f(ab)-f(a)b-(-1)^{|f|\,|a|}a f(b)}{2}+
\frac{f(1)ab+(-1)^{|a||f|}4af(1)b+(-1)^{|f||ab|}abf(1)}{12}\\
&+(-1)^{|a||b|}(a\rightleftarrows b)\vphantom{\frac{1}{1}}\,.\end{split}\]

\begin{lemma}\label{lem.reductionstep} 
For every $x,a_2,\ldots,a_n\in A$,  every $f\in \Hom^*_{\K}(A,A)$ and every $n>0$ we have:
\begin{enumerate}

\item $\Phi_x^n(1,a_2,\ldots,a_n)=-\Phi_x^{n-1}(a_2,\ldots,a_n)$,

\item $\Phi_f^n(1,a_2,\ldots,a_n)=\Phi_{f(1)}^{n-1}(a_2,\ldots,a_n)$,

\item $\Psi_f^n(1,a_2,\ldots,a_n)=0$.

\end{enumerate}
In particular $\Phi^n_f(1,\ldots,1)=(-1)^{n-1} f(1)$ and then $\Phi^n_f=0$ if and only if $\Psi^0_f=f(1)=0$ and $\Psi^n_f=0$.
\end{lemma}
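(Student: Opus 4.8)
The plan is to recast the three evaluation identities as \emph{operator} identities in $D(A)$ and then to run an induction driven by the recursive formulas of Theorem~\ref{thm.recursive}. The basic observation is that inserting the unit in the first slot is the same as bracketing with $\mu_{-1}=1\in D_{-1}(A)$: for any $P\in D_{n-1}(A)$ the pre-Lie rules give $\mu_{-1}\barwedge P=0$, hence $[P,\mu_{-1}]=P\barwedge\mu_{-1}$, so that $[P,\mu_{-1}](a_2,\ldots,a_n)=P(1,a_2,\ldots,a_n)$ with no extra sign, since $\mu_{-1}$ has internal degree $0$. Thus (1) and (2) are equivalent to the identities $[\Phi^n_x,\mu_{-1}]=-\Phi^{n-1}_x$ and $[\Phi^n_f,\mu_{-1}]=\Phi^{n-1}_{f(1)}$ in $D_{n-2}(A)$, and statement (3) is then immediate: using $\Psi^n_f=\Phi^n_f+\Phi^n_{f(1)}$ together with (2) for $\Phi_f$ and (1) for $\Phi_{f(1)}$ one gets $[\Psi^n_f,\mu_{-1}]=\Phi^{n-1}_{f(1)}-\Phi^{n-1}_{f(1)}=0$.

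I would prove the two operator identities by induction on $n$. The engine is the commutation of $[-,\mu_{-1}]$ through the brackets $[-,\mu_h]$ appearing in the recursion: since all the $\mu$'s have internal degree $0$ (so all Koszul signs are trivial), the graded Jacobi identity gives $[[\,\cdot\,,\mu_h],\mu_{-1}]=[\,\cdot\,,[\mu_h,\mu_{-1}]]+[[\,\cdot\,,\mu_{-1}],\mu_h]$, and Lemma~\ref{lem.commutazionemuenne}, which is stated already for $m=-1$, supplies the crucial value $[\mu_h,\mu_{-1}]=\mu_{h-1}$. Feeding this into the recursion of Theorem~\ref{thm.recursive}, applying the inductive hypothesis to the factors $[\Phi^{n-h}_\bullet,\mu_{-1}]$, and re-expressing the resulting sums through the defining recursion at level $n-1$ together with the degree relation $[Q,\mu_0]=k\,Q$ for $Q\in D_k(A)$, the two pieces recombine: in case (1) they telescope to $-n\,\Phi^{n-1}_x$, and after dividing by $n$ this yields $[\Phi^n_x,\mu_{-1}]=-\Phi^{n-1}_x$; in case (2) the $\mu_{h-1}$-part cancels against the recursion and vanishes, while the remaining part reconstructs $(n-1)\,\Phi^{n-1}_{f(1)}$, which after dividing by the prefactor $n-1$ gives $[\Phi^n_f,\mu_{-1}]=\Phi^{n-1}_{f(1)}$. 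The base cases $n=1$ are direct: $[\Phi^1_x,\mu_{-1}]=[[x,\mu_1],\mu_{-1}]=[x,\mu_0]=-x$ and $[\Phi^1_f,\mu_{-1}]=[f,\mu_{-1}]=f(1)$.

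The main obstacle is purely organizational: keeping the index bookkeeping straight so that the two halves telescope correctly, and handling the boundary terms (it is convenient to set $\Phi^{-1}_\bullet=0$, which is consistent with $[\Phi^0_x,\mu_{-1}]=[x,\mu_{-1}]=0$). One could instead attempt the structural shortcut coming from $\Phi_x=[\Phi_{Id},\Phi_x]$, which leads to $\bigl([\Phi_{Id},-]-2\bigr)[\Phi_x,\mu_{-1}]=\Phi_x$; however $[\Phi_{Id},-]$ does not preserve the $D_k$-grading, so solving this uniquely is delicate, and the direct induction is cleaner.

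Finally, for the ``in particular'' part I would iterate the identities just proved: applying (2) once and then (1) exactly $n-1$ times to the all-ones argument gives $\Phi^n_f(1,\ldots,1)=\Phi^{n-1}_{f(1)}(1,\ldots,1)=(-1)^{n-1}\Phi^0_{f(1)}=(-1)^{n-1}f(1)$. For the equivalence, if $\Phi^n_f=0$ then in particular $(-1)^{n-1}f(1)=\Phi^n_f(1,\ldots,1)=0$, so $\Psi^0_f=f(1)=0$; since $x\mapsto\Phi_x$ is linear we get $\Phi^n_{f(1)}=0$ and hence $\Psi^n_f=\Phi^n_f+\Phi^n_{f(1)}=0$. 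Conversely, if $f(1)=0$ and $\Psi^n_f=0$, then $\Phi^n_{f(1)}=0$ and therefore $\Phi^n_f=\Psi^n_f-\Phi^n_{f(1)}=0$.
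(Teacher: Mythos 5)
Your proof is correct, but it takes a genuinely different route from the paper's. Both arguments begin with the same translation (inserting $1$ into the first slot equals bracketing with $\mu_{-1}$, since $\mu_{-1}\barwedge P=0$), but from there the paper argues purely structurally: using $\Psi_{Id}=\Phi_{Id}+\Phi_{1}=\mu_{-1}$ from \eqref{equ.formulapsiuno} and the fact that $\Phi,\Psi$ are morphisms of graded Lie algebras, it gets $[\Phi_x,\mu_{-1}]=\Phi_{[x,Id+1]}=-\Phi_x$ and $[\Psi_f,\mu_{-1}]=\Psi_{[f,Id]}=0$ in one line each, and then deduces item (2) from (1) and (3) via $\Phi_f=\Psi_f-\Phi_{f(1)}$ (so its logical order is (1),(3),(2), whereas yours is (1),(2),(3)). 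Your engine is instead an induction on the recursions of Theorem~\ref{thm.recursive}, powered by graded Jacobi, $[\mu_h,\mu_{-1}]=\mu_{h-1}$ and $[Q,\mu_0]=kQ$ for $Q\in D_k(A)$; I checked the bookkeeping and it closes exactly as you claim: in case (1) the three contributions are $(n-2)\Phi^{n-1}_x$, $-(n-1)\Phi^{n-1}_x$ and $-(n-1)\Phi^{n-1}_x$, totalling $-n\,\Phi^{n-1}_x$ before dividing by $n$, while in case (2) the $\mu_{h-1}$-part cancels internally and the inductive part reassembles the element-recursion for $\Phi_{f(1)}$. The trade-off: the paper's proof is three lines but leans on the full strength of Theorem~\ref{thm.unicity} (the Lie-morphism property of $\Psi$ and $\Psi_{Id}=\mu_{-1}$); yours is longer but logically lighter, needing only the recursive formulas and Lemma~\ref{lem.commutazionemuenne} and never invoking generalized Jacobi. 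One last remark: the structural shortcut you considered and rejected ($\Phi_x=[\Phi_{Id},\Phi_x]$, awkward because $[\Phi_{Id},-]$ mixes the $D_k$-grading) is not the one the paper uses; bracketing with the single homogeneous element $\mu_{-1}=\Psi_{Id}=\Phi_{Id}+\Phi_1$, rather than with $\Phi_{Id}$ alone, is precisely what makes the structural route grading-safe, so your stated reason for preferring the induction dissolves once \eqref{equ.formulapsiuno} is in hand.
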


\begin{proof} We have seen that $\Psi_{Id}=\Phi_{Id}+\Phi_{1}=\mu_{-1}$ and then, 
\[ \begin{split}
\Phi_x(1,a_2,\ldots,a_n)&=[\Phi_x,\mu_{-1}](a_2,\ldots,a_n)=\Phi_{[x,Id+1]}(a_2,\ldots,a_n)\\
&=-\Phi_x(a_2,\ldots,a_n)\,,\\
\Psi_f(1,a_2,\ldots,a_n)&=[\Phi_f,\mu_{-1}](a_2,\ldots,a_n)=\Psi_{[f,Id]}(a_2,\ldots,a_n)=0\,,\\
\Phi_f(1,a_2,\ldots,a_n)&=\Psi_f(1,a_2\ldots,a_n)-\Phi_{f(1)}(1,a_2,\ldots,a_n)\\
&=-\Phi_{f(1)}(1,a_2,\ldots,a_n)=\Phi_{f(1)}(a_2,\ldots,a_n)\,.
\end{split}\]
\end{proof}

\begin{example}[Derivations]\label{ex.derivations} 
Let $f\colon A\to A$ be a derivation,
then $[f,\mu_n]=\Psi^{n+1}_f=\Phi^{n+1}_f=0$ for every $n>0$.

In fact, assuming $f(ab)=f(a)b+(-1)^{|a||f|}af(b)$ for every $a,b\in A$,  a 
completely straightforward computation gives
$[f,\mu_1]=[f,\mu_2]=0$. According to  Lemma~\ref{lem.commutazionemuenne} and  Jacobi identity we have  then 
$[f,\mu_n]=0$ for every $n>2$.
The vanishing of $\Phi^n_f$ for $n\ge 2$ it is now an immediate consequence of 
Theorem~\ref{thm.recursive}, while the vanishing of $\Psi^n_f$ follows from the fact that $f(1)=0$.

The converse of the above implication  is generally false when $A$ is not graded commutative. 
Consider for instance 
the algebra $A=T(V)/I$, where $V$ is a vector space of dimension $\ge 2$, $T(V)=\bigoplus_{n\ge 0}V^{\otimes n}$ is the tensor algebra generated by $V$ and $I$ is the ideal generated by 
$V^{\otimes 3}$. Consider now a map $f\colon A\to A$ such that 
$f(1)=f(v)=f(u\otimes v+v\otimes u)=0$ for every $u,v\in V$. Since $f(1)=0$ we have 
$\Phi_f=\Psi_f$ and  it is  
easy to see that $[f,\mu_n]=\Psi^{n+1}_f=\Phi^{n+1}_f=0$ for every $n>0$.

\end{example}

\begin{example}[Left and right multiplication maps]\label{ex.leftright}
For a graded associative algebra $A$ and every $x\in A$ we shall denote by 
$L_x$ and $R_x$ the operators of left and right multiplication by $x$:
\[ L_x,R_x\colon A\to A,\qquad L_x(a)=xa,\quad R_x(a)=(-1)^{|a||x|}ax\;.\]
Denoting by $\{a,b\}=ab-(-1)^{|a||b|}ba$ the graded commutator in $A$, we have:
\[ L_x(1)=R_x(1)=x,\qquad \Psi^1_{L_x}(a)=\{x,a\},\quad \Psi^1_{R_x}(a)=\{a,x\},\]
\[ \Phi^2_{L_x}(a,b)=\Phi^2_{R_x}(a,b)=\frac{-1}{2}((-1)^{|a||x|}axb+(-1)^{(|a|+|x|)|b|}bxa)\,.\]
\[ \Psi^2_{L_x}(a,b)=\Psi^2_{R_x}(a,b)=\frac{1}{12}(\{\{x,a\},b\}+(-1)^{|a||b|}\{\{x,b\},a\})\,.\]
and then 
\[ \Psi^1_{L_x+R_x}=0,\qquad \Psi^2_{L_x+R_x}(a,b)=\frac{1}{6}(\{\{x,a\},b\}+(-1)^{|a||b|}\{\{x,b\},a\})\,.\]
Notice that $L_x-R_x=\{x,-\}$ is a derivation and then
\[ \Phi^n_{L_x}=\Phi^n_{R_x},\qquad \Psi^n_{L_x}=\Psi^n_{R_x},\]
for every $n\ge 2$.
\end{example}

\begin{example} As a partial converse of Example~\ref{ex.derivations} we have that if 
$A=T(V)$ is a tensor algebra and  
$f\colon A\to A$ is linear, then $f$ is a derivation if and only if $\Phi^2_f=0$. 

In fact, if $\Phi^2_f=0$, according to  the formula $\Phi^2_f(1,1)=-f(1)=0$ we have $f(1)=0$; 
replacing $f$ with $f-\delta$, where 
$\delta\colon A\to A$ is the (unique) derivation such that $\delta(v)=f(v)$ for every $v\in V$, it is not restrictive to assume  $f(V)=0$. 
For every $a\in V$, since $f(a)=0$, we have $0=\Phi^2_f(a,a)=f(a^2)$; by the same argument 
$0=\Phi^2_f(a^2,a)=f(a^3)$ and more generally $f(a^n)=0$ for every $n$.

Next we prove by induction on $n$ that $f(V^{\otimes n+1})=0$;
assuming $f(V^{\otimes i})=0$ for every $i\le n$, we need to prove $f(ab)=0$ for every
$a\in V$ and  $b=v_1\otimes\cdots\otimes v_n\in
V^{\otimes n}$. 
If $ab-ba=0$ then every $v_i$ is a scalar multiple of $a$, and therefore 
$f(ab)=cf(a^{n+1})=0$.
If $ab\not=ba$, then by the inductive assumption
\[0=2\Phi^2_f(a,b)=f(ab)+f(ba),\qquad  0=2\Phi^2_f(a^2,b)=f(a^2b)+f(ba^2)\;.\]
Moreover, the vanishing of $\Phi^2_f(ab,a)$ and $\Phi^2_f(a,ba)$ gives the equalities 
\[ f(aba)+f(a^2b)=f(ab)a+af(ab),\qquad f(aba)+f(ba^2)=f(ba)a+af(ba),\]
whose sum gives $f(aba)=0$ and therefore 
\[ f((ab)^2)+f((ba)^2)=f((aba)b)+f(b(aba))=0\,.\] 
\[ abf(ab)+f(ab)ab=f((ab)^2)=-f((ba)^2)=-baf(ba)-f(ba)ba=baf(ab)+f(ab)ba,\]
\[ (ab-ba)f(ab)+f(ab)(ab-ba)=0\,.\]
Since $ab-ba\not=0$ the last equality implies $f(ab)=0$.
\end{example}

\begin{proposition}\label{prop.centralcase}
Let $x\in A$ be a central element. 
Then 
\[\Psi_{xy}^n=L_x\Psi_y^n,\qquad \Psi_{L_xf}^n=L_x\Psi_f^n,\qquad
\Psi^{n+1}_f(x,a_1,\ldots,a_n)=\Psi^n_{[f,L_x]}(a_1,\ldots,a_n)\,,\]
for every $y\in A$, $f\colon A\to A$. For
 every sequence of central elements  
$x,c_1,\ldots,c_n\in A$ we have 
\[ \Psi^n_x(c_1,\ldots,c_n)=(-1)^n xc_1\cdots c_n,\qquad
\Psi^n_f(c_1,\ldots,c_n)=[...[[f,L_{c_1}],L_{c_2}]\ldots, L_{c_n}](1)\,,\]
and therefore, when $A$ is graded commutative the   
Koszul brackets $\Psi^n_f$ are the same of  the ones defined in \cite{koszul85}. 
\end{proposition}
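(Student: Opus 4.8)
The whole proposition can be run inside the graded Lie algebra $(D(A),[\,,])$, after two elementary observations that make every statement Lie-algebraic. First, inserting a fixed element $x\in A=D_{-1}(A)$ into the first slot is exactly the Nijenhuis--Richardson bracket with $x$: for $g\in D_n(A)$ one has $g(x,a_1,\dots,a_n)=(g\barwedge x)(a_1,\dots,a_n)=[g,x](a_1,\dots,a_n)$, because $x\barwedge(-)=0$. Second, for a \emph{central} $x$ one checks directly from the definition of $\mu_h$ that $\mu_h(x,a_1,\dots,a_h)=x\,\mu_{h-1}(a_1,\dots,a_h)$, i.e. $[\mu_h,x]=L_x\mu_{h-1}$; the Koszul signs produced by moving the central $x$ to the front cancel exactly against $\epsilon(\pi)$. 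I would also record the companion fact that, since $|\mu_h|=0$ and $\mu_h$ is multilinear over the centre, post-composition $L_x$ by a central $x$ commutes with $[-,\mu_h]$, namely $[L_x\circ g,\mu_h]=L_x\circ[g,\mu_h]$; this follows by treating the two $\barwedge$-summands separately, pulling $x$ out of the outer slot in one and out of the inner $\mu_h$ in the other.

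Granting this, parts (1) and (2) are immediate inductions on $n$ via the recursions of Theorem~\ref{thm.recursive}. For (1) one uses $\Psi_w=\Phi_w$ on elements, $\Phi^0_{xy}=xy=L_x\Phi^0_y$, and the step $[L_x\Phi^{n-h}_y,\mu_h]=L_x[\Phi^{n-h}_y,\mu_h]$; for (2) one argues identically for $\Phi_{L_xf}$ and then uses $(L_xf)(1)=x\,f(1)$ together with (1) to absorb the $\Phi_{(L_xf)(1)}$ term. Specialising (1) to $y=1$ gives $\Psi_x=L_x\Psi_1$ for central $x$, and since $\Psi^n_1=(-1)^n\mu_{n-1}$ by \eqref{equ.formulapsiuno} while $\mu_{n-1}(c_1,\dots,c_n)=c_1\cdots c_n$ on central arguments, the first evaluation formula $\Psi^n_x(c_1,\dots,c_n)=(-1)^n x c_1\cdots c_n$ drops out at once.

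The substantive point is part (3), which I would reformulate, through the insertion-as-bracket identity, as the global statement $[\Psi_f,x]=\Psi_{[f,L_x]}$ for central $x$ (note $[f,L_x]\in D_0(A)$). Writing $\Psi_f=\Phi_f+\Phi_{f(1)}$, this splits into an endomorphism identity $[\Phi_f,x]=\Phi_{[f,L_x]}+\Phi_{f(x)}$ and an element identity $[\Phi_y,x]=-(-1)^{|x||y|}\Phi_{xy}$, which recombine using $[f,L_x](1)=f(x)-(-1)^{|f||x|}x f(1)$ and linearity of $\Phi$ in its element subscript. To prove each, I would use the operator $M:=\Phi_{Id}=\sum_{h\ge0}(-1)^h\mu_h$ of \eqref{equ.phidellidentita}, together with $[\Phi_f,M]=0$, $[M,\Phi_y]=\Phi_y$ (the images of $[f,Id]=0$ and $[Id,y]=y$ under the Lie morphism $\Phi$) and $[M,x]=L_x\Psi_1=\Phi_x$ (from $[\mu_h,x]=L_x\mu_{h-1}$ and (1)). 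A two-line use of graded Jacobi then shows that both sides of each identity satisfy the \emph{same} linear equation $[M,W]=C$, with $C=\Phi_{f(x)}$ in the endomorphism case and $C=W$ in the element case.

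The \textbf{main obstacle} is the uniqueness statement that closes part (3): $[M,W]=C$ must determine $W$ from its lowest components. Here the shape of $M$ is decisive: its leading term $\mu_0=Id$ gives $[\mu_0,W_k]=-k\,W_k$ on $D_k(A)$, so the equation reads $-k\,W_k=C_k-\sum_{h\ge1}(-1)^h[\mu_h,W_{k-h}]$ and solves uniquely for $W_k$ whenever $k\neq0$, with $W_{-1}$ pinned by $C_{-1}$ and $W_0$ the single free datum. Thus matching $W_{-1}$ (and, in the inhomogeneous endomorphism case, also $W_0$, a short explicit check in $D_0(A)$) forces equality throughout, yielding $[\Psi_f,x]=\Psi_{[f,L_x]}$. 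Iterating this on central arguments then gives $\Psi^n_f(c_1,\dots,c_n)=[\cdots[[f,L_{c_1}],L_{c_2}],\dots,L_{c_n}](1)$. Finally, when $A$ is graded commutative every element is central, so this holds for all arguments; expanding the iterated commutator $[\cdots[[f,L_{a_1}],\dots],L_{a_n}]$ as an alternating sum over $S\subseteq\{1,\dots,n\}$ of operators $L_{\prod_{i\notin S}a_i}\circ f\circ L_{\prod_{i\in S}a_i}$ and evaluating at $1$ reproduces, after regrouping by $|S|=k$ and invoking symmetry, precisely Koszul's formula \eqref{equ.defiKoszul}.
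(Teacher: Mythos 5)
Your proposal is correct, and for the first two formulas and the two evaluation formulas it follows essentially the paper's own route: the commutation $[L_x\phi,\mu_h]=L_x[\phi,\mu_h]$ for central $x$, induction on the recursions of Theorem~\ref{thm.recursive}, and then $\Psi_x=L_x\Psi_1$ combined with \eqref{equ.formulapsiuno}. Where you genuinely diverge is the insertion identity $\Psi^{n+1}_f(x,a_1,\ldots,a_n)=\Psi^n_{[f,L_x]}(a_1,\ldots,a_n)$. The paper disposes of it in one line: specializing your part (2) to $f=Id$ gives $\Psi_{L_x}=L_x\Psi_{Id}=x\mu_{-1}$, i.e.\ $\Psi_{L_x}$ \emph{is} the element $x\in D_{-1}(A)$; since $\Psi$ is a morphism of graded Lie algebras, $\Psi_{[f,L_x]}=[\Psi_f,\Psi_{L_x}]=[\Psi_f,x]$, which by your own insertion-as-bracket observation is exactly insertion of $x$ into the first slot. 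You instead split $\Psi_f=\Phi_f+\Phi_{f(1)}$, prove two separate $\Phi$-identities by showing that both sides of each solve the same equation $[M,W]=C$ with $M=\Phi_{Id}$, and then argue that such an equation determines $W$ from its lowest components. Your Jacobi computations ($[M,\Phi_y]=\Phi_y$, $[M,\Phi_f]=0$, $[M,x]=\Phi_x$, $[\Phi_y,\Phi_x]=\Phi_{[y,x]}=0$) are correct, the recursive solvability of $[M,W]=C$ via $[\mu_0,W_k]=-kW_k$ is correct, and the required low-degree matchings (in $D_{-1}(A)$ for the element identity; in $D_{-1}(A)$ and $D_0(A)$ for the endomorphism identity) do close, so the argument is complete. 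But it is considerably heavier: it re-runs, in this special situation, the same kind of filtered-recursion uniqueness that underlies the proof of Theorem~\ref{thm.unicity}, and it forces explicit verifications (e.g.\ of $\Phi^2_f(x,a)$ for central $x$) that the paper's argument never needs. What the detour buys is a reusable principle, namely that a degree-filtered equation $[M,W]=C$ pins down $W$ from its bottom components; what it costs is the observation that, having already proved part (2), the identification $\Psi_{L_x}=x\mu_{-1}$ was available to you and collapses all of part (3) --- and with it the iterated-commutator formula and the comparison with \cite{koszul85} in the commutative case --- to a single application of the generalized Jacobi property.
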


\begin{proof} 
Since $x$ is central we have 
$[L_x\phi,\mu_n]=L_x[\phi,\mu_n]$ for every $n$ and every $\phi\in D(A)$; the first two formulas follow from 
this and Theorem~\ref{thm.recursive}.
In particular, for $f=Id=L_1$ we get $\Psi_{L_x}=L_x\Psi_{Id}$, viz. $\Psi^0_{L_x}=x\mu_{-1}$ and 
$\Psi^n_{L_x}=0$ for every $n>0$. This gives 
\[ \Psi_{[f,L_x]}=[\Psi_f,\Psi_{L_x}]=[\Psi_f,x\mu_{-1}]\;.\]
Since $[f,L_1]=0$ we get $\Psi^n_f(1,a_2,\ldots,a_n)=\Psi^{n-1}_{[f,L_1]}(a_2,\ldots,a_n)=0$ and, 
if $c_1,\ldots,c_n\in A$  are central elements, by induction on $n$ we have 
\[ \Psi^n_f(c_1,\ldots,c_n)=\Psi^0_{[...[[f,L_{c_1}],L_{c_2}]\ldots, L_{c_n}]}=
[...[[f,L_{c_1}],L_{c_2}]\ldots, L_{c_n}](1)\;.\]
\end{proof}

\begin{remark}
A  well known consequence of Proposition~\ref{prop.centralcase} is that, in the commutative case, the Koszul brackets are hereditary: this means that if $\Psi^n_f=0$ for some $n>0$, then $\Psi^k_f=0$ for every $k\ge n$.
This is clear if $n=1$ since $\Psi^1_f=0$ if and only if $f=L_{f(1)}$. 
If $\Psi_f^n=0$  for some $n>1$, then $\Psi^{n-1}_{[f,L_x]}=0$ for every $x$ and by induction 
$\Psi^{k-1}_{[f,L_x]}=0$ for every $x$ and every $k\ge n$.
According to Lemma~\ref{lem.reductionstep} also the reduced Koszul brackets $\Phi^n_f$ are hereditary. 
 
This is generally false in the non-commutative case. Consider for instance an element $x\in A$ of degree $0$ and the operator $f=L_x+R_x\colon A\to A$, $f(a)=ax+xa$, for which we have already seen in Example~\ref{ex.leftright} that
\[ \Psi^1_f=0,\qquad \Psi^2_f(a,b)=\frac{1}{6}(\{\{x,a\},b\}+(-1)^{|a||b|}\{\{x,b\},a\})\,.\]
\end{remark}

\begin{remark} If $A$ is  not commutative and $d\colon A\to A$ is a derivation, then  in general 
$\Phi^3_{d^2}\not=0$. Therefore 
the attempt to use Koszul higher brackets to 
define differential operators remains  unsatisfactory, 
although slightly better than the trivial extension of  
Grothendieck's definition, cf. \cite[Rem. 2.3.5]{GinzSche}.
\end{remark}

\bigskip
\section{The formulas of Bering and Bandiera}
\label{sec.beringbandiera}

For a given linear endomorphism $f\colon A\to A$, 
the brackets $\Psi^n_f$ were defined by Koszul \cite{koszul85} in the commutative case by the formula
\[\begin{split}
\Psi^n_f(a_1,\ldots,a_n)&=[...[[f,L_{a_1}],L_{a_2}]\ldots, L_{a_n}](1)\\
&=\sum_{k=0}^n\frac{(-1)^{n-k}}{k!(n-k)!}\sum_{\pi\in \Sigma_n}\epsilon(\pi)\; 
f(1\cdot a_{\pi(1)}\cdots a_{\pi(k)})a_{\pi(k+1)}\cdots a_{\pi(n)}\,,
\end{split}\] 
where the $1$ inside the argument of $f$ is the unit of $A$. The
generalized Jacobi identities  
\begin{equation}\label{equ.basicidentity}
\Psi^{n}_{[f,g]}=\sum_{i=0}^{n+1}[\Psi_f^i,\Psi_g^{n-i+1}]
\end{equation} 
were first proved  independently in \cite{BDA,kravchenko2000}. Extensions of the brackets $\Psi^n_f$ to the 
non-commutative case satisfying \eqref{equ.basicidentity} were first 
proposed by Bering \cite{Bering} and later, with a completely different approach, by Bandiera \cite{derived} as a consequence of a more general formula about derived brackets. 
Both approaches  involve the sequence
$B_n$ of  Bernoulli numbers:
\[ \begin{split}
B(x)&=\sum_{n\ge 0}B_n\frac{x^n}{n!}=\frac{x}{e^x-1}\\
&=1-\frac{1}{2}\,\frac{x}{1!}+\frac{1}{6}\,\frac{x^2}{2!}-\frac{1}{30}\,\frac{x^4}{4!}
+\frac{1}{42}\,\frac{x^6}{6!}-\frac{1}{30}\,\frac{x^8}{8!}+\frac{5}{66}\,\frac{x^{10}}{10!}+\cdots\,.\end{split}\]
In order to  simplify the notation it is useful to introduce the rational numbers
\[ B_{i,j}=(-1)^j\sum_{k=0}^j \binom{j}{k}B_{i+k}\,,\qquad i,j\ge 0\,,\]
together their exponential generating function 
\[ B(x,y)=\sum_{i,j\ge 0}B_{i,j}\frac{x^iy^j}{i!\,j!}\in \Q[[x,y]]\,.\]
Since  
\[\begin{split}
B(x,y)=&\sum_{i,j}(-1)^j\sum_{k=0}^j B_{i+k}\binom{j}{k}\frac{x^iy^j}{i!j!}
=\sum_{i,j}\sum_{k=0}^j \frac{(-y)^{j-k}}{(j-k)!}B_{i+k}\frac{x^i(-y)^k}{i!\,k!}\\
=&\;e^{-y}B(x-y)
=\frac{x-y}{e^{x}-e^{y}}\\
=&\; 1-\left(\frac{x}{2}+\frac{y}{2}\right)
+\frac{1}{2!}\left(\frac{x^{2}}{6}+\frac{2xy}{3}+\frac{y^{2}}{6}\right)
-\frac{1}{3!}\left(\frac{x^{2}y}{2}+\frac{xy^{2}}{2}\right) \\
&+\frac{1}{4!}\left(-\frac{x^{4}}{30}+\frac{2x^{3}y}{15}
+\frac{4x^{2}y^{2}}{5}+\frac{2xy^{3}}{15}-\frac{y^{4}}{30}\right)+\cdots\,,
\end{split}\]
we have $B(x,y)=B(y,x)$ and therefore  $B_{i,j}=B_{j,i}$ for every $i,j$; as a byproduct we 
have just proved the following (well known) formulas about Bernoulli numbers:
\begin{equation}\label{equ.3bis}
(-1)^j\sum_{k=0}^j \binom{j}{k}B_{i+k}=(-1)^i\sum_{k=0}^i \binom{i}{k}B_{j+k},\qquad i,j\ge 0\,.
\end{equation}

\begin{equation}\label{equ.3ter}
\sum_{k=0}^n \binom{n}{k}B_{k}=(-1)^n\sum_{k=0}^0 \binom{0}{k}B_{n+k}=(-1)^nB_n,\qquad n\ge 0\,.
\end{equation}

For $x=y$ we get 
\[ \sum_{i,j=0}^{\infty}B_{i,j}\frac{x^{i}}{i!}\frac{x^{j}}{j!}=
e^{-x}\sum_{n=0}^{\infty}\frac{B_n}{n!}(x-x)^n=e^{-x}\]
and then 
\[ \sum_{i=0}^{n}B_{i,n-i}\frac{x^{n}}{i!}\frac{x^{j}}{j!}=(-1)^n\frac{x^n}{n!},\qquad 
\sum_{i=0}^{n}\binom{n}{i}B_{i,n-i}=(-1)^n\,.\]

\begin{lemma}\label{lem.sommabigei} 
The numbers $B_{i,j}$ are uniquely determined by the following properties:
\begin{enumerate}
\item $B_{0,n}=B_n$;

\item $B_{i,j}+B_{i+1,j}+B_{i,j+1}=0$ for every $i,j\ge 0$.
\end{enumerate}
\end{lemma}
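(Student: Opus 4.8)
The plan is to verify that the numbers $B_{i,j}$ defined above satisfy properties (1) and (2), and then to observe that these two properties pin down a unique doubly-indexed array, which must therefore coincide with the $B_{i,j}$. The uniqueness half is the easy one: property (1) prescribes the entire row $i=0$, namely $B_{0,j}=B_j$ for all $j$, while property (2), rewritten as
\[ B_{i+1,j}=-B_{i,j}-B_{i,j+1}, \]
expresses every entry in row $i+1$ in terms of two entries in row $i$. Hence, by induction on $i$, all entries $B_{i,j}$ are determined by row $0$, so the array satisfying (1)--(2) is unique.

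For existence, the cleanest route uses the exponential generating function $B(x,y)=\dfrac{x-y}{e^{x}-e^{y}}$ computed above. Property (1) is immediate upon setting $x=0$, since $B(0,y)=\dfrac{-y}{1-e^{y}}=\dfrac{y}{e^{y}-1}=B(y)$, whose coefficients are the Bernoulli numbers; alternatively it follows directly from \eqref{equ.3ter}. For property (2), the point is that the shift $i\mapsto i+1$ corresponds to $\partial_{x}$ and the shift $j\mapsto j+1$ to $\partial_{y}$, so that (2) is equivalent to the single functional identity $B+\partial_{x}B+\partial_{y}B=0$, i.e.\ $(\partial_{x}+\partial_{y})B=-B$. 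Writing $B=u/D$ with $u=x-y$ and $D=e^{x}-e^{y}$, one has $(\partial_{x}+\partial_{y})u=0$ and $(\partial_{x}+\partial_{y})D=D$, whence $(\partial_{x}+\partial_{y})B=-uD/D^{2}=-B$, as required.

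I expect no serious obstacle here; the only computation with any content is property (2), and it is a two-line manipulation either through the generating function or, if one prefers an elementary argument, directly from the defining sum. Indeed, applying Pascal's rule $\binom{j+1}{k}=\binom{j}{k}+\binom{j}{k-1}$ to $B_{i,j+1}$ and reindexing the resulting sum yields $B_{i,j}+B_{i,j+1}=-B_{i+1,j}$, which is exactly property (2). This elementary route bypasses the generating function entirely and relies only on the definition of $B_{i,j}$ together with \eqref{equ.3ter} and Pascal's identity.
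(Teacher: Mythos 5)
Your proof is correct and takes essentially the same approach as the paper: the paper also treats uniqueness as the trivial row-by-row recursion and establishes $B_{i,j}+B_{i+1,j}+B_{i,j+1}=0$ either by binomial identities applied to the defining sum or by applying $\partial_x+\partial_y$ to a generating-function identity. Your quotient-rule computation on $B(x,y)=(x-y)/(e^x-e^y)$ is a minor variant of the paper's differentiation of $e^yB(x,y)=B(x-y)$, and your Pascal-rule argument is exactly the binomial route the paper sketches; you simply supply the details the paper omits.
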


\begin{proof} The only nontrivial part is  the proof that 
$B_{i,j}+B_{i+1,j}+B_{i,j+1}=0$. This can be done either by applying binomial identities
to the formulas  
$B_{i,j}=(-1)^j\sum_{k=0}^j \binom{j}{k}B_{i+k}$, or by applying the differential operator $\dfrac{\partial~}{\partial x}+\dfrac{\partial~}{\partial y}$ to the equality
$e^yB(x,y)=B(x-y)$. 
\end{proof}

We are now ready to prove that both Bering and Bandiera's brackets coincide with the brackets defined in Section~\ref{sec.uniquenesstheorem}; the key technical points of the proof will be the two lemmas 
 \ref{lem.reductionstep} and \ref{lem.sommabigei}.

\begin{theorem}[Bering's formulas]\label{thm.beringformula} 
In the above notation, for every $x\in A$ and every $f\in \Hom^*_{\K}(A,A)$ we have 
\[\begin{split}
\Psi_{x}^{n}(a_{1},\ldots, a_{n})
&=\!\!\sum_{i,j\ge 0}^{i+j=n}\frac{B_{i,j}}{\;i!\,j!\;}
\sum_{\pi\in \Sigma_{n}}
\epsilon(\pi,i,x)\,
a_{\pi(1)}\cdots a_{\pi(i)}\,x\, a_{\pi(i+1)}\cdots a_{\pi(n)}\,,\\
\Psi_{f}^{n}(a_{1},\ldots, a_{n})
&=\!\!\!\!\sum_{i,j,k\ge 0}^{i+j+k=n}\frac{B_{i,j}}{i!\,j!\,k!}
\sum_{\pi\in \Sigma_{n}}
\epsilon(\pi,i,f)\,
a_{\pi(1)}\cdots a_{\pi(i)}\cdot\\
&\qquad\qquad\qquad\qquad\quad\;\cdot\! f(1\!\cdot\! a_{\pi(i+1)}\cdots a_{\pi(i+k)})\, a_{\pi(i+k+1)}\cdots a_{\pi(n)}\,,
\end{split}\]
where 
\[\epsilon(\pi,i,x)=\epsilon(\pi)\!\cdot\! (-1)^{|x|(|a_{\pi(1)}|+\cdots+|a_{\pi(i)}|)},\qquad
\epsilon(\pi,i,f)=\epsilon(\pi)\!\cdot\! (-1)^{|f|(|a_{\pi(1)}|+\cdots+|a_{\pi(i)}|)},\] are the usual Koszul signs.
\end{theorem}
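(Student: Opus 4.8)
The plan is to fix the genuine brackets $\Psi^n_x$ and $\Psi^n_f$ supplied by Theorem~\ref{thm.unicity} and to \emph{compute their coefficients}, rather than to verify the generalized Jacobi identity for the right-hand sides directly. First I would use naturality together with the polarization trick already employed in the proof of Theorem~\ref{thm.unicity} to reduce everything to a universal computation: for fixed $n$ the operator $\Psi^n_x$ is a graded-symmetric multilinear expression, linear in $x$, hence a $\K$-linear combination of the words $a_{\pi(1)}\cdots a_{\pi(i)}\,x\,a_{\pi(i+1)}\cdots a_{\pi(n)}$, and graded symmetry forces the coefficient to depend only on the number $i$ of factors to the left of $x$. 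Thus I may write $\Psi^n_x=\sum_{i+j=n}\frac{\beta^{(n)}_{i,j}}{i!\,j!}\sum_\pi\epsilon(\pi,i,x)\,a_{\pi(1)}\cdots a_{\pi(i)}\,x\,a_{\pi(i+1)}\cdots a_{\pi(n)}$ with unknown scalars $\beta^{(n)}_{i,j}$; the analogous normal form for $\Psi^n_f$ carries a third block, the arguments absorbed by $f$, with unknowns $\gamma^{(n)}_{i,j,k}$. The theorem then amounts to proving $\beta^{(n)}_{i,j}=B_{i,j}$ (and $\gamma^{(n)}_{i,j,k}\,i!\,j!\,k!=B_{i,j}$), which I would establish by verifying, by induction on $n$, the two characterizing properties of Lemma~\ref{lem.sommabigei}.

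The recursion, property~(2), is where Lemma~\ref{lem.reductionstep} enters. Setting $a_1=1$ and using that the unit has degree $0$ (so no Koszul signs are created), each reduced word in $a_2,\ldots,a_n$ with $i'$ factors left of $x$ and $j'=n-1-i'$ to the right is produced in exactly two ways: by inserting the unit into the left block, which has $i'+1$ available slots and, after the cancellation $(i'+1)/(i'+1)!=1/i'!$, contributes $\beta^{(n)}_{i'+1,j'}$, and by inserting it into the right block, contributing $\beta^{(n)}_{i',j'+1}$. Hence the identity $\Psi^n_x(1,a_2,\ldots,a_n)=-\Psi^{n-1}_x(a_2,\ldots,a_n)$ of Lemma~\ref{lem.reductionstep}(1) reads $\beta^{(n)}_{i'+1,j'}+\beta^{(n)}_{i',j'+1}=-\beta^{(n-1)}_{i',j'}$, which is precisely the three-term relation $B_{i,j}+B_{i+1,j}+B_{i,j+1}=0$ of Lemma~\ref{lem.sommabigei}(2). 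For the $f$-brackets the same bookkeeping applied to Lemma~\ref{lem.reductionstep}(3) yields a third contribution, from inserting the unit into the argument of $f$, which supplies exactly the middle term, so that $\Psi^n_f(1,\ldots)=0$ encodes the identical recursion.

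It then remains to pin down the initial datum, property~(1), namely $B_{0,n}=B_n$: equivalently, that the coefficient of the extreme word $x\,a_1\cdots a_n$ in $\Psi^n_x$ is $B_n/n!$. This is the one genuinely new input, since the recursion leaves exactly one free scalar at each level; the other natural relations, e.g.\ $\sum_i\binom{n}{i}\beta^{(n)}_{i,n-i}=(-1)^n$ coming from the central case of Proposition~\ref{prop.centralcase}, turn out to be automatically satisfied and do \emph{not} fix it (the coefficient of the free parameter in them is $\sum_i\binom{n}{i}(-1)^i=0$). I would compute this extreme coefficient from the recursive formula of Theorem~\ref{thm.recursive}, tracking only the $x$-leftmost monomial through each bracket $[\Phi^{n-h}_x,\mu_h]$; one must be careful that several shuffles contribute to that monomial (an easily overlooked point), and the resulting numerical recursion should be the defining recursion of the Bernoulli numbers. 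Once $\beta^{(n)}_{0,n}=B_n$ is in hand, Lemma~\ref{lem.sommabigei} forces $\beta^{(n)}_{i,j}=B_{i,j}$ and gives Bering's formula for $\Psi^n_x$; the $f$-formula follows either by the same two-step scheme or by combining the $x$-formula with $\Psi^n_f=\Phi^n_f+\Phi^n_{f(1)}$ and Lemma~\ref{lem.reductionstep}(2).

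The main obstacle is exactly this boundary value $B_{0,n}=B_n$: unlike the recursion it is not forced by the reduction step, and extracting it demands a careful, sign-sensitive tracking of the extreme monomial through the Nijenhuis--Richardson brackets (or, alternatively, a generating-function analysis of the exponential defining $\widehat\Psi$ in Theorem~\ref{thm.unicity}, identified with $x/(e^x-1)$). The secondary difficulty is purely combinatorial: making the ``insert the unit into each block'' count rigorous with all Koszul signs, and checking for the $f$-brackets that the normalized coefficients $\gamma^{(n)}_{i,j,k}\,i!\,j!\,k!$ depend only on $(i,j)$, so that Lemma~\ref{lem.sommabigei} indeed applies.
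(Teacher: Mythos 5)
Your plan reproduces the paper's own proof almost step for step: the paper likewise writes $\Phi^n_x$ (resp.\ $\Psi^n_f$) in the normal form with unknown rational coefficients $C_{i,j}$ (resp.\ $C_{i,j,k}$), derives the three-term relation $C_{i,j}+C_{i+1,j}+C_{i,j+1}=0$ by extracting coefficients from the identity $\Phi^{n+1}_x(1,a_1,\ldots,a_n)=-\Phi^n_x(a_1,\ldots,a_n)$ of Lemma~\ref{lem.reductionstep}, pins down the boundary value $C_{0,n}=B_n$ by tracking the extreme monomial $x\,a_1\cdots a_n$ through the brackets $[\Phi^{n-h}_x,\mu_h]$ of Theorem~\ref{thm.recursive} (where, exactly as you predict, the sum collapses via the Bernoulli identities $\sum_{s=0}^{n}\binom{n+1}{s}B_s=\sum_{s=0}^{n-1}\binom{n}{s}B_s=0$), and concludes with Lemma~\ref{lem.sommabigei}; the $f$-case is handled as you indicate, with $\Psi^n_f=\Phi^n_f+\Phi^n_{f(1)}$ supplying the $k=0$ layer and $\Psi^{n+1}_f(1,\ldots)=0$ driving an induction on $k$. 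The only ingredient you leave unexecuted, the boundary computation, is the one genuinely computational step of the paper and works out as you anticipate, so your attempt is correct and essentially identical in structure.
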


\begin{proof}
A direct inspection shows that the above formulas are true for $n=0,1,2$. In general, expanding the recursive equations of Theorem~\ref{thm.recursive} we get  
\[\Phi_{x}^{n}(a_{1},\ldots, a_{n})
=\!\!\sum_{i,j\ge 0}^{i+j=n}\frac{C_{i,j}}{i!\,j!}
\sum_{\pi\in \Sigma_{n}}
\epsilon(\pi,i,x)
a_{\pi(1)}\cdots a_{\pi(i)}\,x\, a_{\pi(i+1)}\cdots a_{\pi(n)}\,,
\]
for a suitable sequence of rational numbers $C_{i,j}$. In order to prove the theorem it is sufficient to prove that $C_{0,n}=B_n$ for every $n\ge 2$ and $C_{i,j}+C_{i+1,j}+C_{i,j+1}=0$ for every $i,j\ge 0$; to this end it is not restrictive to assume that $A$ is the free unitary 
associative algebra generated by $x,a_1,\ldots,a_n$, $|x|=|a_i|=0$.
The coefficient of $a_1\cdots a_i\, x\, a_{i+1}\cdots a_n$ in 
$\Phi_{x}^{n+1}(1,a_{1},\ldots, a_{n})$ is equal to 
\[ (i+1)\frac{C_{i+1,j}}{(i+1)!j!}+(j+1)\frac{C_{i,j+1}}{i!(j+1)!}=\frac{C_{i+1,j}+C_{i,j+1}}{i!\,j!}\;.\]
According to Lemma~\ref{lem.reductionstep},   
$\Phi_{x}^{n+1}(1,a_{1},\ldots, a_{n})=-\Phi_{x}^{n}(a_{1},\ldots, a_{n})$ and then the above 
coefficient is equal to $-\frac{C_{i,j}}{i!j}$.
 
The proof that $C_{0,n}=B_n$ is done by induction on $n$. Assuming $C_{0,i}=B_i$ for every $i<n$, 
the coefficient of  $x\,a_1\cdots a_n$ in 
$[\Phi^{n-h}_x,\mu_h]$, $h>0$,  is equal to 
\[ \frac{B_{n-h}}{(n-h)!}\frac{n-h-1}{(h+1)!},\]
and therefore 
\[ \frac{C_{0,n}}{n!}=\frac{1}{n}\sum_{h=1}^n(-1)^{h+1}\frac{B_{n-h}}{(n-h)!}\frac{n-h-1}{(h+1)!}\,.\]
Since $(-1)^{h+1}B_{n-h}(n-h-1)=(-1)^{n+1}B_{n-h}(n-h-1)$ for every $h$ we have
\[\begin{split} 
(-1)^{n+1}C_{0,n}&=(n-1)!\sum_{h=1}^n\frac{B_{n-h}}{(n-h)!}\frac{n-h-1}{(h+1)!}\\
&=n!\sum_{h=1}^n\frac{B_{n-h}}{(n-h)!(h+1)!}-(n-1)! \sum_{h=1}^n\frac{B_{n-h}}{(n-h)!h!}\\
&=\frac{1}{n+1}\sum_{h=1}^n\binom{n+1}{n-h}B_{n-h}-\frac{1}{n}\sum_{h=1}^n\binom{n}{n-h}B_{n-h}\\
&=\frac{1}{n+1}\sum_{s=0}^{n-1}\binom{n+1}{s}B_{s}-\frac{1}{n}\sum_{s=0}^{n-1}\binom{n}{s}B_s
\,.\end{split}\]
Since $n\ge 2$ we have 
\[ \sum_{s=0}^{n}\binom{n+1}{s}B_{s}=\sum_{s=0}^{n-1}\binom{n}{s}B_s=0\]
and therefore 
\[ (-1)^{n+1}C_{0,n}=-\frac{1}{n+1}\binom{n+1}{n}B_n=-B_n,\qquad C_{0,n}=(-1)^nB_n=B_n\;.\]
Again by  Theorem~\ref{thm.recursive} we have 
\[\begin{split}
\Phi_{f}^{n}(a_{1},\ldots, a_{n})
=\sum_{k=1}^n\sum_{i,j\ge 0}^{i+j=n-k}\frac{C_{i,j,k}}{i!\,j!\,k!}
\sum_{\pi\in \Sigma_{n}}
&{\epsilon(\pi,i,f)}
a_{\pi(1)}\cdots a_{\pi(i)}\cdot\\
&\;\cdot f(1\!\cdot\! a_{\pi(i+1)}\cdots a_{\pi(i+k)})\cdot a_{\pi(i+k+1)}\cdots a_{\pi(n)}\,,
\end{split}\]
for a suitable sequence of rational numbers $C_{i,j,k}$. Therefore
\[\begin{split}
\Psi_{f}^{n}(a_{1},\ldots, a_{n})
=\sum_{k=0}^n\sum_{i,j\ge 0}^{i+j=n-k}\frac{C_{i,j,k}}{i!\,j!\,k!}
\sum_{\pi\in \Sigma_{n}}
&{\epsilon(\pi,i,f)}
a_{\pi(1)}\cdots a_{\pi(i)}\cdot\\
&\;\cdot f(1\cdot a_{\pi(i+1)}\cdots a_{\pi(i+k)})\cdot a_{\pi(i+k+1)}\cdots a_{\pi(n)}\,,
\end{split}\]
where $C_{i,j,0}=B_{i,j}$ for every $i,j\ge 0$; in order to prove the theorem it is therefore sufficient to show by induction on $k$ that  $C_{i,j,k}=B_{i,j}$. By Lemma~\ref{lem.reductionstep} we have  
$\Psi^{n+1}_f(1,a_1,\ldots,a_n)=0$, whereas the coefficient of  
$a_{1}\cdots a_{i}\cdot f(1\cdot a_{i+1}\cdots a_{i+k})\cdot a_{i+k+1}\cdots a_{n}$ in 
$\Psi^{n+1}_f(1,a_1,\ldots,a_n)$ is equal to 
\[ 0=(i+1)\frac{C_{i+1,j,k}}{(i+1)!\,j!\,k!}+(j+1)\frac{C_{i,j+1,k}}{i!(j+1)!\,k!}+
(k+1)\frac{C_{i,j,k+1}}{i!\,j!\,(k+1)!}\;,\]
and then
\[ C_{i,j,k+1}=-C_{i+1,j,k}-C_{i,j+1,k}=-B_{i+1,j}-B_{i,j+1}=B_{i,j}\,.\]
\end{proof}

As already pointed out in \cite{Bering,voronov}, although  
the expression
$[...[[f,L_{a_1}],L_{a_2}]\ldots, L_{a_n}](1)$ 
makes sense in every unitary graded associative algebra,  
its symmetrization does not give the Koszul brackets $\Psi^n_f$. 
For instance, if $|a|=|b|=|f|=0$ and $f(1)=0$, then 
\[ \frac{[[f,L_a],L_b]+(-1)^{|a||b|}[[f,L_b],L_a]}{2}-\Psi^2_f(a,b)=
\frac{\{f(a),b\}+\{f(b),a\}}{2}\,,\]
where $\{-,-\}$ denotes the graded commutator in $A$.

\begin{theorem}[Bandiera's formulas] Let $A$ be a graded unitary associative algebra. 
For every $x\in A$ and every $f\in \Hom^*_{\K}(A,A)$ we have:
\[\begin{split} 
\Psi_x^n(a_1,\ldots,a_n)&=\!\sum_{\pi\in \Sigma_n}\varepsilon(\pi)\sum_{k=0}^{n
}\frac{(-1)^nB_{n-k}}{k!(n-k)!}\{\{\cdots\{x\,a_{\pi(1)}\cdots a_{\pi(k)},
a_{\pi(k+1)}\},\ldots\},a_{\pi(n)}\}\;,\\
\Psi_f^n(a_1,\ldots,a_n)&=\!\sum_{\pi\in \Sigma_n}\varepsilon(\pi)\sum_{k=0}^{n
}\frac{B_{n-k}}{k!(n-k)!}\{\{\cdots\{f_k(a_{\pi(1)},\ldots,a_{\pi(k)}),
a_{\pi(k+1)}\},\ldots\},a_{\pi(n)}\}\;,\end{split} \]
where $\{a,b\}=ab-(-1)^{|a||b|}ba$, and $f_k\colon A^{\otimes k}\to A$ is the sequence of  operators
\[ f_0=f(1),\qquad f_n(a_1,\ldots,a_n)=[\cdots[[f,L_{a_1}],L_{a_2}],\ldots, L_{a_n}](1)\,.\]
\end{theorem}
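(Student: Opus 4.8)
The plan is to deduce both identities from Bering's formula (Theorem~\ref{thm.beringformula}), which is already available, by expanding the iterated graded commutators and matching the coefficient of each normal-form monomial. Exactly as in the proof of Theorem~\ref{thm.beringformula}, by naturality and multilinearity it is not restrictive to assume that $A$ is the free unitary associative algebra generated by $x,a_1,\ldots,a_n$ with all generators of degree $0$; then every Koszul sign is trivial, $\varepsilon(\pi)=1$, and both sides become $\Q$-linear combinations of the monomials $a_{\sigma(1)}\cdots a_{\sigma(i)}\,x\,a_{\sigma(i+1)}\cdots a_{\sigma(n)}$ (in the $\Psi_x$ case), each of which is realized by exactly one pair $(\sigma,i)$. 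So it suffices to compare rational coefficients.

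For $\Psi_x^n$ I would first expand the nested commutator: each of the $n-k$ outer brackets with $a_{\pi(m)}$ sends that letter either to the far right (sign $+1$) or to the far left (sign $-1$) of the current word. Fixing a target monomial with $i$ letters left of $x$ and $j=n-i$ letters right of $x$, a pair $(\pi,\text{assignment})$ contributes to it precisely when the initial block $a_{\pi(1)}\cdots a_{\pi(k)}$ coincides with the first $k$ letters to the right of $x$ (which forces $k\le j$ and determines $\pi(1),\ldots,\pi(k)$), the left-going letters fill the left block in reverse insertion order, and the right-going letters fill the rest of the right block in insertion order. The only freedom is the interleaving of the $i$ left-going and $j-k$ right-going letters among the positions $k+1,\ldots,n$, which gives $\binom{n-k}{i}$ terms, each of sign $(-1)^i$. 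Summing over $k$, the coefficient of the target monomial on Bandiera's side is
\[(-1)^{n+i}\frac{1}{i!}\sum_{k=0}^{j}\frac{B_{n-k}}{k!\,(n-k-i)!}=(-1)^{n+i}\frac{1}{i!\,j!}\sum_{m=0}^{j}\binom{j}{m}B_{i+m}=\frac{B_{i,j}}{i!\,j!}\,,\]
where the middle step is the substitution $m=j-k$ and the last equality uses the definition $B_{i,j}=(-1)^j\sum_m\binom{j}{m}B_{i+m}$ together with $n=i+j$. This is precisely the coefficient appearing in Bering's formula, so the first identity follows.

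For $\Psi_f^n$ the scheme is identical in spirit but involves three blocks. The new ingredient is the internal expansion of $f_k(a_{\pi(1)},\ldots,a_{\pi(k)})=[\cdots[f,L_{a_{\pi(1)}}],\ldots,L_{a_{\pi(k)}}](1)$ as a sum over subsets of its arguments: the chosen arguments enter the slot of $f$ in their original order, producing a factor $f(1\cdot a_{\ldots})$, while the remaining arguments are placed immediately to the left of $f$ in \emph{reverse} order, each contributing a sign $-1$. The outer commutators then push $a_{\pi(k+1)},\ldots,a_{\pi(n)}$ to the far left or far right of the whole expression. Fixing a target monomial with left block of size $i$, inner (argument) block of size $k'$ and right block of size $j=n-i-k'$, the inner block is supplied entirely by the inside arguments of $f_k$, the right block entirely by right-going outer letters, and the left block by the $k-k'$ outside-left arguments of $f_k$ together with the $i-(k-k')$ left-going outer letters. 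A binomial count of the admissible interleavings yields the total coefficient
\[\frac{(-1)^i}{k'!\,j!\,i!}\sum_{s=0}^{i}\binom{i}{s}B_{j+s}=\frac{B_{i,j}}{i!\,j!\,k'!}\,,\]
where the last equality uses the symmetry $B_{i,j}=B_{j,i}$ of equation~\eqref{equ.3bis}; again this matches Bering's coefficient.

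The main technical obstacle is entirely in this triple-block bookkeeping for $\Psi_f$: one must keep careful track of the reversed order of the outside-left letters produced by $f_k$ relative to the left-going outer letters, and of the resulting signs, in order to carry out the interleaving count correctly. Once the counting is set up, the two Bernoulli collapses (the defining relation of $B_{i,j}$ in the $\Psi_x$ case, and the symmetry $B_{i,j}=B_{j,i}$ in the $\Psi_f$ case) do all the remaining work, and everything else is a direct transcription of the $\Psi_x$ computation.
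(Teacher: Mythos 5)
Your proposal is correct, and it shares the paper's overall skeleton: both proofs deduce Bandiera's formulas from Bering's (Theorem~\ref{thm.beringformula}) by expanding the iterated commutators into normal-form monomials and matching rational coefficients, with the reduction to the free algebra on degree-zero generators justified exactly as in the proof of that theorem. Where you genuinely diverge is in \emph{how} the coefficients are identified. The paper never enumerates the expansion: it writes it with unknown coefficients $C_{i,j}$ (resp.\ $C_{i,j,k}$), computes only the extreme ones $C_{0,n}$ (resp.\ $C_{0,n-k,k}$) by reading off the coefficient of $x\,a_1\cdots a_n$ (resp.\ of $f(1\cdot a_1\cdots a_k)a_{k+1}\cdots a_n$) via \eqref{equ.3ter}, and then determines all the others from the recursions $C_{i+1,j}+C_{i,j+1}=-C_{i,j}$ and $C_{i+1,j,k}=-C_{i,j,k+1}-C_{i,j+1,k}$, which follow from the unit-insertion identities $\Theta_x^{n+1}(1,a_1,\ldots,a_n)=-\Theta_x^{n}(a_1,\ldots,a_n)$ and $\Theta_f^{n+1}(1,a_1,\ldots,a_n)=0$ (the analogues of Lemma~\ref{lem.reductionstep}) combined with the uniqueness characterization of the $B_{i,j}$ in Lemma~\ref{lem.sommabigei}. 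You instead compute \emph{every} coefficient in closed form by enumerating insertions --- $\binom{n-k}{i}$ admissible terms of sign $(-1)^i$ in the $\Psi_x$ case, and $\binom{k}{k'}\binom{n-k}{j}$ terms of sign $(-1)^{k-k'}(-1)^{i-(k-k')}=(-1)^i$ for each $k=k'+s$ in the $\Psi_f$ case --- and then collapse the Bernoulli sums using the definition of $B_{i,j}$ and the symmetry \eqref{equ.3bis}; I have checked both collapses and they are exact, including the sign bookkeeping. Your route buys a direct, induction-free argument that needs neither Lemma~\ref{lem.sommabigei} nor the unit-insertion identities; its cost is precisely the three-block bookkeeping you flag yourself, which in your write-up is asserted rather than carried out (the sentence ``a binomial count of the admissible interleavings yields the total coefficient'' hides the verification that the inner-left arguments of $f_k$ must sit adjacent to $f$, so that the only freedoms are the choice of the subset $S$ of positions, giving $\binom{k}{k'}$, and the outer interleaving, giving $\binom{n-k}{j}$). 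The paper's route trades this enumeration for two soft structural identities plus an induction, which makes it shorter, less error-prone, and able to recycle verbatim the argument already written for Theorem~\ref{thm.beringformula}.
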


\begin{proof}
Denoting momentarily by 
\[\begin{split} 
\Theta_x^n(a_1,\ldots,a_n)&=\!\sum_{\pi\in \Sigma_n}\varepsilon(\pi)\sum_{k=0}^{n
}\frac{(-1)^nB_{n-k}}{k!(n-k)!}\{\{\cdots\{x\,a_{\pi(1)}\cdots a_{\pi(k)},
a_{\pi(k+1)}\},\ldots\},a_{\pi(n)}\}\;,\\
\Theta_f^n(a_1,\ldots,a_n)&=\!\sum_{\pi\in \Sigma_n}\varepsilon(\pi)\sum_{k=0}^{n
}\frac{B_{n-k}}{k!(n-k)!}\{\{\cdots\{f_k(a_{\pi(1)},\ldots,a_{\pi(k)}),
a_{\pi(k+1)}\},\ldots\},a_{\pi(n)}\}\;,\end{split} \]
by Bering's formulas it is sufficient  to prove that 
\begin{equation}\label{equ.bandier1}
\Theta_{x}^{n}(a_{1},\ldots, a_{n})
=\!\!\sum_{i,j\ge 0}^{i+j=n}\frac{B_{i,j}}{i!\,j!}
\sum_{\pi\in \Sigma_{n}}
\epsilon(\pi,i,x)\,
a_{\pi(1)}\cdots a_{\pi(i)}\,x\, a_{\pi(i+1)}\cdots a_{\pi(n)}\,,
\end{equation}
\begin{equation}\label{equ.bandier2}
\begin{split}
\Theta_{f}^{n}(a_{1},\ldots, a_{n})
&=\!\!\sum_{i,j,k\ge 0}^{i+j+k=n}\frac{B_{i,j}}{i!\,j!\,k!}
\sum_{\pi\in \Sigma_{n}}
\epsilon(\pi,i,f)\,
a_{\pi(1)}\cdots a_{\pi(i)}\cdot\\
&\qquad\qquad\qquad\qquad\cdot f(1\cdot a_{\pi(i+1)}\cdots a_{\pi(i+k)})\, a_{\pi(i+k+1)}\cdots a_{\pi(n)}\,.
\end{split}\end{equation}
It's easy to see that, for every $a_1,\ldots,a_n\in A$, we have:
\[  \Theta_f^{n+1}(1,a_2,\ldots,a_n)=0,\qquad \Theta_x^{n+1}(1,a_1,\ldots,a_n)=-\Theta_x^{n}(a_1,\ldots,a_n)\,,\]
and then we can prove \eqref{equ.bandier1} and \eqref{equ.bandier2} in the same way as in Theorem~\ref{thm.beringformula}. In fact, expanding  the commutator brackets we can write
\[\Theta_{x}^{n}(a_{1},\ldots, a_{n})
=\!\!\sum_{i,j\ge 0}^{i+j=n}\frac{C_{i,j}}{i!\,j!}
\sum_{\pi\in \Sigma_{n}}
\epsilon(\pi,i,x)
a_{\pi(1)}\cdots a_{\pi(i)}\,x\, a_{\pi(i+1)}\cdots a_{\pi(n)}\,,
\]
for some rational coefficients $C_{i,j}$. Looking at 
the coefficient of $xa_1\cdots a_n$ in 
$\Theta_x^{n}(a_1,\ldots,a_n)$ we get 
\[ \frac{C_{0,n}}{n!}=(-1)^n\sum_{k=0}^n\frac{B_{n-k}}{k!(n-k)!}=\frac{(-1)^n}{n!}
\sum_{k=0}^n\binom{n}{k}B_{k}=\frac{B_n}{n!}=\frac{B_{0,n}}{n!}\;,\]
and the proof of $C_{i,j}=B_{i,j}$ follows from the equality 
$\Theta_x^{n+1}(1,a_1,\ldots,a_n)=-\Theta_x^{n}(a_1,\ldots,a_n)$ exactly as in 
Theorem~\ref{thm.beringformula}. Similarly,
expanding the operators $f_k$ and the commutator brackets we can write
\[\begin{split}
\Theta_{f}^{n}(a_{1},\ldots, a_{n})
=\sum_{k=0}^n\sum_{i,j\ge 0}^{i+j=n-k}\frac{C_{i,j,k}}{i!\,j!\,k!}
\sum_{\pi\in \Sigma_{n}}
&{\epsilon(\pi,i,f)}
a_{\pi(1)}\cdots a_{\pi(i)}\cdot\\
&\;\cdot f(1\cdot a_{\pi(i+1)}\cdots a_{\pi(i+k)})\cdot a_{\pi(i+k+1)}\cdots a_{\pi(n)}\,,
\end{split}\]
for certain rational coefficients $C_{i,j,k}$. Comparing the coefficients of 
$f(1\cdot a_{1}\cdots a_{k})\cdot a_{k+1}\cdots a_{n}$ we get
\[ \frac{C_{0,n-k,k}}{k!\,(n-k)!}=\frac{B_{n-k}}{k!\,(n-k)!},\qquad C_{0,n-k,k}=B_{n-k}=B_{0,n-k}\;.\]
We now prove by induction on $i$ that $C_{i,j,k}=B_{i,j}$ and therefore the proof of 
the equality $\Psi_f=\Theta_f$  follows by Bering's formula; the coefficient of  
$a_{1}\cdots a_{i}\cdot f(1\cdot a_{i+1}\cdots a_{i+k})\cdot a_{i+k+1}\cdots a_{n}$ in 
$\Theta^{n+1}_f(1,a_1,\ldots,a_n)$ is equal to 
\[ 0=(i+1)\frac{C_{i+1,j,k}}{(i+1)!\,j!\,k!}+(j+1)\frac{C_{i,j+1,k}}{i!(j+1)!\,k!}+
(k+1)\frac{C_{i,j,k+1}}{i!\,j!\,(k+1)!}\;,\]
and then
\[ C_{i+1,j,k}=-C_{i,j,k+1}-C_{i,j+1,k}=-B_{i,j}-B_{i,j+1}=B_{i+1,j}\,.\]
\end{proof}

\bigskip
\section{Additional  remarks}

\subsection*{The uniqueness theorem for non-unitary algebras}

It is clear from the proof that Theorem~\ref{thm.unicity} admits several slight modifications, either changing the underlying categories or the choice of initial terms and gauge fixing conditions. 
According to Theorem~\ref{thm.recursive}, the reduced Koszul brackets $\Phi^n$ also make sense for graded associative algebras without unit. It is therefore natural to expect an uniqueness theorem also for reduced Koszul brackets in the setup of non-unitary associative algebras.

\begin{theorem}\label{thm.unicity2}
There exists a unique way to assign to every graded associative algebra $A$ a morphism of graded Lie algebras
\[ \Phi\colon\End^*(A)\to D(A),\qquad \Phi=\sum \Phi^n,\quad \Phi^n\colon\End^*(A)\to D_{n-1}(A),\] 
such that the following conditions are satisfied:

\begin{enumerate}

\item \emph{naturality:} for every morphism of graded associative algebras  
$\alpha\colon A\to B$ and every pair of linear maps $f\colon A\to A$, $g\colon B\to B$ such that 
$g\alpha=\alpha f$, we have
\[ 
\alpha\Phi^n_f=\Phi^n_g\alpha^{\odot n}\quad\colon A^{\odot n}\to B\,.\]

\item \emph{base change}:  the operators $\Phi^n_{Id}$ are multilinear over the graded centre of $A$: 
more precisely, if $c\in A$ and $ac=(-1)^{|a||c|}ca$ for every $a\in A$, then 
\[ \Phi^n_{Id}(a_1,\ldots,a_nc)=\Phi^n_{Id}(a_1,\ldots,a_n)c\]
for every $a_1,\ldots,a_n$.

\item \emph{initial terms:} for every $f\colon A\to A$ we have 
\[\Phi^0_f=0,\qquad \Phi^1_f=f\;.\]

\item \emph{gauge fixing:} at least one of the following conditions is satisfied:
\begin{enumerate}

\item if $A=\K$ then $\Phi_{Id}=\sum_{n\ge 0}(-1)^n\mu_n$;

\item if $A=\K[t]$  and $\de^2$ is the second derivative operator, then 
$\Phi^2_{\de^2}(a,b)=2(\de a)(\de b)$ and  $\Phi^n_{\de^2}=0$ for every $n\ge 3$.

\end{enumerate}

\end{enumerate}
\end{theorem}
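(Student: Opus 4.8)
The plan is to follow the proof of Theorem~\ref{thm.unicity}, with the single operator $\Phi_{Id}$ playing the role previously played by $\Psi_1$ and $\Psi_{Id}$, and with the centrality of $Id$ in $\End^*(A)$ replacing the use of $\mu_{-1}$. For existence I would take the operators $\Phi^n$ defined by the recursive formulas of Theorem~\ref{thm.recursive}; since those formulas only involve the maps $\mu_h$ with $h\ge 1$, which are built from the product alone, they make sense on every graded associative algebra, unitary or not. Naturality, base change and the initial terms follow from the corresponding properties of the $\mu_h$ by induction on $n$. To obtain the Lie morphism property I would pass to the unitalization $A^+=A\oplus\K$ and, for $f\in\End^*(A)$, consider the extension $\tilde f\in\End^*(A^+)$ with $\tilde f(1)=0$ and $\tilde f|_A=f$; then $\widetilde{[f,g]}=[\tilde f,\tilde g]$, and since the defining recursion involves only the product and the map $\tilde f$, both of which send tuples from the ideal $A$ back into $A$, the unitary brackets $\Phi^n_{\tilde f}$ of Theorem~\ref{thm.recursive} carry $A^{\odot n}$ into $A$, where they restrict to the $\Phi^n_f$ above. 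As the Nijenhuis--Richardson bracket of operators preserving the ideal $A$ restricts to that on $A$, the Lie morphism identity valid on the unitary algebra $A^+$ descends to $A$. Finally this $\Phi$ satisfies both gauge conditions: (a) is \eqref{equ.phidellidentita} read on $\K$, while for (b) one has $\Phi^2_{\de^2}=[\de^2,\mu_1]$ with $[\de^2,\mu_1](a,b)=\de^2(ab)-\de^2(a)b-a\de^2(b)=2(\de a)(\de b)$, and $\Phi^n_{\de^2}=0$ for $n\ge 3$ by Lemma~\ref{lem.reductionstep} together with $\de^2(1)=0$ and the vanishing $\Psi^n_{\de^2}=0$ $(n\ge 3)$ coming from Proposition~\ref{prop.centralcase}, as $\de^2$ is a differential operator of order two.

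For uniqueness, let $\Phi$ satisfy (1)--(3) and one of the gauge conditions. The first observation is that $Id$ is central in the graded Lie algebra $\End^*(A)$, so $[f,Id]=0$ and hence $[\Phi_f,\Phi_{Id}]=0$ for every $f$. Taking the component in $D_k(A)$ and isolating the summand coming from $\Phi^1_{Id}=\mu_0=Id$, for which $[\Phi^{k+1}_f,\mu_0]=k\,\Phi^{k+1}_f$ because $\Phi^{k+1}_f\in D_k(A)$, one obtains
\[ k\,\Phi^{k+1}_f=-\sum_{j\ge 2}[\,\Phi^{k+2-j}_f,\Phi^j_{Id}\,],\qquad k\ge 1. \]
Together with $\Phi^0_f=0$ and $\Phi^1_f=f$, this determines every $\Phi^n_f$ recursively once the operators $\Phi^j_{Id}$ are known. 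Thus the whole of $\Phi$ is governed by $\Phi_{Id}$, and it remains to pin the latter down.

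The next step fixes the form of $\Phi_{Id}$. By the polarization argument of Theorem~\ref{thm.unicity} --- base change from $\K$ to $\K[t]$, then the morphism $t\mapsto\sum_i a_it_i$ into $A[t_1,\ldots,t_n]$ together with naturality and comparison of the coefficients of $t_1\cdots t_n$ --- each $\Phi^n_{Id}$ is a universal scalar multiple of $\mu_{n-1}$, say $\Phi^n_{Id}=c_{n-1}\mu_{n-1}$ with $c_{n-1}\in\Q$ independent of $A$, and $c_0=1$ is forced by $\Phi^1_{Id}=Id=\mu_0$. Under gauge condition (a) one reads off $c_n=(-1)^n$, so $\Phi_{Id}=\sum_{n\ge 0}(-1)^n\mu_n$ and the recursion above recovers the unique $\Phi$.

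The remaining case, gauge condition (b), is the one I expect to cause the main difficulty, because it constrains $\Phi_{\de^2}$ rather than $\Phi_{Id}$ directly. Working on $\K[t]$, where $\Phi^n_{Id}=c_{n-1}\mu_{n-1}$ by the previous step and where $[\de^2,Id]=0$ yields the same recursion with $f=\de^2$, I would expand $\Phi^{k+1}_{\de^2}$ in terms of $\de^2$ and $c_1,\ldots,c_k$. For $k=1$ this gives $\Phi^2_{\de^2}=-c_1[\de^2,\mu_1]=-2c_1(\de a)(\de b)$, so the first half of (b) forces $c_1=-1$; for $k\ge 2$ the only summand of $\Phi^{k+1}_{\de^2}$ containing the highest unknown $c_k$ is $-\tfrac1k c_k[\de^2,\mu_k]$, so the requirement $\Phi^{k+1}_{\de^2}=0$ determines $c_k$ from $c_1,\ldots,c_{k-1}$ as soon as $[\de^2,\mu_k]\neq 0$ in $D_k(\K[t])$. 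The crux is therefore this nondegeneracy, which I would verify by evaluating on $(t,\ldots,t)$: the insertions of $\de^2$ into $\mu_k$ vanish because $\de^2 t=0$, while $\mu_k(t,\ldots,t)=t^{k+1}$ gives $\de^2(t^{k+1})=(k+1)k\,t^{k-1}$, whence $[\de^2,\mu_k](t,\ldots,t)=(k+1)k\,t^{k-1}\neq 0$. Hence (b) determines all the $c_n$ as well, and the recursion of the first step again yields a unique $\Phi$, completing the proof.
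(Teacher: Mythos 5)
Your existence argument works (the unitalization trick with $\tilde f(1)=0$ correctly reduces the Lie-morphism property to the unitary case, and your verification of both gauge conditions agrees with the paper's), and the skeleton of your uniqueness argument is the paper's as well: centrality of $Id$ in $\End^*(A)$ gives the recursion $k\,\Phi^{k+1}_f=-\sum_{j\ge 2}[\Phi^{k+2-j}_f,\Phi^j_{Id}]$, so everything reduces to identifying $\Phi_{Id}$, and your explicit nondegeneracy check $[\de^2,\mu_k](t,\ldots,t)=k(k+1)\,t^{k-1}\neq 0$ makes precise a point the paper leaves implicit in gauge case (b).

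However, there is a genuine gap at the polarization step, and it is exactly the point where the non-unitary statement differs from Theorem~\ref{thm.unicity}. You transfer $\Phi^n_{Id}=c_{n-1}\mu_{n-1}$ from $\K[t]$ to an arbitrary $A$ via ``the morphism $t\mapsto\sum_i a_it_i$ into $A[t_1,\ldots,t_n]$''. When $A$ has no unit, no such morphism of graded associative algebras $\alpha\colon\K[t]\to A[t_1,\ldots,t_n]$ exists: multiplicativity forces $\alpha(1)$ to be an idempotent satisfying $\alpha(1)\alpha(t)=\alpha(t)$, and for instance if $A$ has zero multiplication and some $a_i\neq 0$ there is no candidate for $\alpha(1)$ at all. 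Since both gauge conditions live on the unitary algebras $\K$ and $\K[t]$, while the theorem quantifies over all (possibly non-unitary) $A$, naturality along some morphism is the only available bridge, and as written your argument establishes uniqueness only on unitary algebras. The paper's fix is short but essential: by naturality applied to the inclusion $t\K[t]\hookrightarrow\K[t]$ (a morphism of non-unitary algebras intertwining the two identity maps), the formula for $\Phi_{Id}$ descends to $t\K[t]$, which is the free non-unitary commutative algebra on one degree-zero generator; the polarization morphism $t\K[t]\to A[t_1,\ldots,t_n]$, $t\mapsto\sum_i a_it_i$, is then well defined for every graded associative $A$, and with this replacement the rest of your argument goes through verbatim.
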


\begin{proof} 
Notice first that when the graded associative algebra $A$ is not unitary, then the definition of $\mu_n\colon A^{\odot n+1}\to A$ makes sense only for $n\ge 0$ and Lemma~\ref{lem.commutazionemuenne} holds for every $n,m\ge 0$.

The existence is clear: it is sufficient to take
\[ \Phi_f=\exp\left(\left[-,\sum_{n=1}^{\infty}K_n \mu_n
\right]\right)f\]
where $K_n$ is the sequence of rational numbers defined in the proof of Theorem~\ref{thm.unicity}: 
when $A$ is unitary we recover the reduced Koszul brackets. In particular,  
for $A=\K[t]$ we have $\de^2(1)=0$,  
\[ \Phi^n_{\de^2}(a_1,\ldots,a_n)=\Psi^n_{\de^2}(a_1,\ldots,a_n)=[...[[\de^2,L_{a_1}],L_{a_2}]\ldots, L_{a_n}](1)\]
and then $\Phi^n_{\de^2}=0$ for every $n\ge 3$. The recursive formulas of Theorem~\ref{thm.recursive} give
$[\Phi^2_{\de^2},\mu_{n-1}]=[\de^2,\mu_n]$ for every $n$.

The proof of the unicity 
is essentially the same as in the the unitary case and we give only a sketch.
Assume that for every graded associative algebra $A$ it is given a morphism of graded Lie algebras 
$\phi\colon \End^*(A)\to D(A)$ which satisfies the condition of the theorem: we want to prove that $\phi=\Phi$. 

For $A=\K$ we have $\phi_{Id}=\mu_0+\sum_{n\ge 1}s_n\mu_n$ for a suitable sequence $s_1,s_2,\ldots\in \K$ 
and  by base change the same holds for $A=\K[t]$. The gauge fixing condition implies that 
$s_n=(-1)^n$ for every $n$: this is clear in the first case, while in the second case 
we have $[\phi^2_{\de^2},\mu_{n-1}]=[\Phi^2_{\de^2},\mu_{n-1}]=[\de^2,\mu_n]$ for every $n$ and from the equality 
\[ 0=[\phi_{\de^2},\phi_{Id}]=
[\phi^2_{\de^2},s_{n-1}\mu_{n-1}]+[\de^2,s_n\mu_n]\] 
we get $s_n=-s_{n-1}$ for every $n$.

Considering the inclusion $t\K[t]\to \K[t]$, by naturality the formula $\phi_{Id}=\sum_{n\ge 0}(-1)^n\mu_n$ holds also for the algebra $t\K[t]$ and the polarization trick gives $\phi_{Id}=\sum_{n\ge 0}(-1)^n\mu_n$ for every graded associative algebra $A$.
Finally the formula 
$[\phi_f,\phi_{Id}]=\phi_{[f,Id]}=0$ gives immediately the recursive equations
\[\phi^0_f=0,\quad \phi^1_f=f,\qquad \phi^{n+1}_f=\frac{1}{n}\sum_{h=1}^n(-1)^{h+1}[\phi^{n-h+1}_f,\mu_h]\;.\]
\end{proof}

\bigskip

\subsection*{The quantum antibracket of Vinogradov, Batalin and Marnelius.}
Given a graded associative algebra $A$, the quantum antibracket associated to a homogeneous element $Q\in A$ of odd degree $|Q|=k$ is defined by the formula
\[ (-,-)_Q\colon A\times A\to A,\qquad (a,b)_Q=\frac{1}{2}(\{a,\{Q,b\}\}-(-1)^{(|a|+k)(|b|+k)}
\{b,\{Q,a\}\})\;.\]
This bracket has been introduced by Batalin and Marnelius in \cite{BM} as the unique bracket (up to a scalar factor) satisfying certain natural properties arising in the context of quantization of classical dynamic. An essentially equivalent  construction was also given by Vinogradov in the algebra of linear endomorphisms of the space of differential forms on a manifold, with $Q=d$ the de Rham differential 
\cite{vino,KS04}.

The bracket $(-,-)_Q$ is graded skewsymmetric of degree $0$ on the shifted complex $A[-k]$ and corresponds, by standard shifting degrees (d\'ecalage) formulas, to the graded symmetric operator $B_Q\colon A^{\odot 2}\to A$
of degree $k$:
\[
B_Q(a,b)=(-1)^{k|a|}(a,b)_Q=-\frac{1}{2}\left(\{\{Q,a\},b\}+
(-1)^{|a||b|}
\{\{Q,b\},a\}\right)
\,.\]
Therefore, according to Example~\ref{ex.leftright}, we have 
\[ \Psi^2_{L_Q}=\Psi^2_{R_Q}=-\frac{1}{6}B_Q\,.\]

\subsection*{Gauge fixing variation and B\"{o}rjeson's brackets.}

Changing the gauge fixing condition in Theorem~\ref{thm.unicity2} one can obtain different hierarchies of higher brackets: for instance, setting $\Phi_{Id}=\mu_0$ we get $\Phi^n=0$ for every $n\ge 2$, while   
setting  $\Phi_{Id}=\mu_0-\mu_1$, i.e., $K_1=1$ and  $K_n=0$ for every $n>1$,  
it is easy to see that the resulting higher brackets are the graded symmetrizations of the 
B\"{o}rjeson's brackets \cite{borjeson,markl1}.


\begin{thebibliography}{99}


\bibitem{akman} F. Akman: \emph{On some generalizations of Batalin-Vilkovisky algebras.} J.\ Pure Appl.\ Algebra, \textbf{120} (1997), 105-141; \texttt{arXiv:q-alg/9506027}.


\bibitem{AB} M. Aschenbrenner and W. Bergweiler: \emph{Julia's equation and differential transcendence.}
\texttt{arXiv:1307.6381 [math.CV]}. 	


\bibitem{derived} R. Bandiera:
\newblock \emph{Nonabelian higher derived brackets.}
J.\ Pure Appl.\ Algebra \textbf{219} (2015), 3292-3313;
\newblock \texttt{arXiv:1304.4097 [math.QA]}.


\bibitem{kapranovbrackets} R. Bandiera: \emph{Formality of Kapranov's brackets in K\"{a}hler geometry via pre-Lie deformation theory.} Int. Math. Res. Notices, 
published online December 23, 2015 doi:10.1093/imrn/rnv362; \texttt{arXiv:1307.8066v3 [math.QA]}.


\bibitem{BM} I.\ A.\ Batalin and R.\ Marnelius: \emph{Quantum antibrackets.} Phys. Lett. B \textbf{434} (1998), 312-320; \texttt{arXiv:hep-th/9805084}.


\bibitem{BDA} K. Bering, P. H. Damgaard and J. Alfaro: \emph{Algebra of higher antibrackets.} Nuclear Phys. B \textbf{478} (1996), 459-503; \texttt{arXiv:hep-th/9604027}.

\bibitem{Bering} K. Bering: \emph{Non-commutative Batalin-Vilkovisky algebras, homotopy Lie algebras and the Courant bracket.} Comm.\ Mat.\ Phys.\ \textbf{274} (2007), 297-341; \texttt{arXiv:hep-th/0603116}.

\bibitem{borjeson}
K. B\"{o}rjeson: \emph{$A_\infty$-algebras derived from associative algebras with a non-derivation differential.}   J. Gen. Lie Theory Appl. \textbf{9}, Number 1 (2015), 5 pages; \texttt{arXiv:1304.6231}.

\bibitem{vino} A. Cabras and  A.M. Vinogradov: \emph{Extensions of the Poisson bracket to differential
forms and multi-vector fields.} J. Geom. Phys. \textbf{9} (1992), 75-100.


\bibitem{FMpoisson} D.\ Fiorenza and M.\ Manetti: 
\emph{Formality of Koszul brackets and deformations of holomorphic Poisson manifolds.} 
Homology, Homotopy and Applications, \textbf{14}, No. 2, (2012), 63-75;
\texttt{arXiv:1109.4309v3 [math.QA]}


\bibitem{GinzSche} V. Ginzburg and T. Schedler: 
\emph{Differential operators and BV structures in noncommutative geometry.} 
Selecta Math. (N.S.) \textbf{16} (2010), no. 4, 673-730; \texttt{arXiv:0710.3392 [math.QA]}. 



\bibitem{K} M.\ Kontsevich:
\emph{Deformation quantization of Poisson manifolds, I.} Letters
in Mathematical Physics~\textbf{66} (2003) 157-216;
\texttt{arXiv:q-alg/9709040}.


\bibitem{KS04} Y.\ Kosmann-Schwarzbach: 
\emph{Derived brackets.} Letters in Mathematical Physics \textbf{69}  (2004) 61-87;  
\texttt{arXiv:math/0312524 [math.DG]}.


\bibitem{koszul85} J.-L.\ Koszul: \emph{Crochet de Schouten-Nijenhuis et cohomologie.} Ast\'erisque, (Numero Hors Serie) (1985) 257-271.


\bibitem{kravchenko2000}
O.\ Kravchenko:
\newblock \emph{Deformations of {B}atalin-{V}ilkovisky algebras.}
\newblock In {\em Poisson geometry (Warsaw, 1998)}, volume~51 of {\em Banach
  Center Publ.}, pages 131-139. Polish Acad. Sci., Warsaw, (2000);
\texttt{arXiv:math/9903191}.

\bibitem{LadaStas} T.~Lada and J.~Stasheff: \emph{
Introduction to sh Lie algebras for physicists.}  
Int. J. Theor. Phys. \textbf{32} (1993) 1087-1104; \texttt{hep-th/9209099}.



\bibitem{GM} M. Manetti and G. Ricciardi: \emph{Universal Lie formulas for higher antibrackets.} \texttt{arXiv:1509.09032 [math.QA]}.

\bibitem{markl1} M. Markl: \emph{On the origin of higher braces and higher-order derivations.} 
J. Homotopy Relat. Struct. \textbf{10} (2015), no. 3, 637-667; \texttt{arXiv:1309.7744 [math.KT]}

\bibitem{markl2} M. Markl: \emph{Higher braces via formal (non)commutative geometry.} In 
\emph{Geometric Methods in Physics: XXXIII Workshop, Bia{\l}owie\.{z}a, Poland, June 29-July 5, 2014}, 
Springer International Publishing (2015), 67-81;
\texttt{ arXiv:1411.6964 [math.AT]}

\bibitem{NijRich67} A.\ Nijenhuis and R.\ W. Richardson: 
\emph{Deformation of Lie algebra structures.}
J. Math. Mech. \textbf{17} (1967), 89-105.

\bibitem{sta93}
J. Stasheff: \emph{The intrinsic bracket on the deformation complex of an associative algebra.} Journal of Pure and Applied Algebra \textbf{89} (1993) 231-235.

\bibitem{voronov} Th. Voronov: \emph{Higher derived brackets and homotopy algebras.}
J. Pure Appl. Algebra \textbf{202} (2005), no. 1-3, 133-153;  \texttt{arXiv:math.QA/0304038}.









\end{thebibliography}
\end{document}